\begin{document}

\hyphenation{Ma-the-ma-tik ma-the-ma-tischen ana-lysis multi-pli-ca-tion de-riva-tive de-riva-tives trans-form-ation ap-proxi-mate eu-clid-ean de-ter-mine rep-re-sen-ta-tion rep-re-sen-ta-tions posi-tive per-pen-dicu-lar math-emat-ics com-pari-son de-ter-mined exam-ine exam-ined ex-peri-ment ex-peri-ments de-ter-min-ant de-ter-min-ants exact-ly re-sult re-sults es-ti-ma-tion con-eigen-val-ues}

\pdfbookmark[1]{Title}{title}
\title{The Generalized Operator Based Prony Method}
\author {Kilian Stampfer\footnote{Institute for Mathematical Stochastics,
G\"ottingen University, Lotzestr.\ 16-18,  37083 G\"ottingen, Germany. Email:
k.stampfer@math.uni-goettingen.de} \qquad Gerlind Plonka\footnote{Institute for
Numerical and Applied Mathematics, G\"ottingen University, Lotzestr.\ 16-18,  37083 G\"ottingen, Germany. Email: plonka@math.uni-goettingen.de}}

\maketitle  

\pdfbookmark[1]{Abstract}{abstract}
\abstract{{\normalfont \bfseries Abstract.} 
The generalized Prony method introduced in \cite{Peter.2013b} is a reconstruction technique for a large variety of sparse signal models that can be represented as sparse expansions
into eigenfunctions of a linear operator $A$. However, this procedure requires the evaluation of higher powers of the linear operator $A$ that are often expensive to provide. 

In this paper we propose two important extensions of the generalized Prony method that simplify the
acquisition of the needed samples essentially and at the same time can improve the numerical stability
of the method. The first extension regards the change of operators from $A$ to $\varphi(A)$,
where $\varphi$ is a suitable operator valued mapping, such that  $A$ 
and $\varphi(A)$ possess the same set of eigenfunctions.
The goal is now  to choose $\varphi$ such that the powers of $\varphi(A)$ are much simpler to evaluate than
the powers of $A$. The second extension concerns the choice of the sampling functionals. We show, how new 
sets of different sampling functionals $F_{k}$ can be applied with the goal to reduce the needed number of 
powers of the operator $A$ (resp. $\varphi(A)$) in the sampling scheme and to simplify the acquisition 
process for the recovery method.
\medskip

{\bf Key words}: Generalized Prony method, exponential operators,
sparse expansions into eigenfunctions of linear operators, parameter identification, generalized sampling.

{\bf Mathematics Subject Classification}: 41A30, 37M99, 65F15.
}

\section{Introduction}
\setcounter{equation}{0}

The recovery of signals  which can be represented or approximated  by finite expansions  into signal atoms is a task regularly
encountered in a variety of fields such as signal processing, biology, and
engineering. 
These ``signal atoms''  have a fixed structure and can be identified by a small number of real or complex parameters.
Therefore,  sparse expansions into these  signal atoms  often permit an arbitrarily high resolution
 in contrast to classical sampling schemes based on Hilbert space techniques.
At the same time these signal models frequently allow a better  physical interpretation.
The most prominent and well-studied signal model of this kind is  a sparse 
expansion into complex exponentials,
 i.e., 
\begin{equation}\label{expo0}
f(x):=\sum\limits_{j=1}^M c_j\exp(T_jx)=\sum\limits_{j=1}^Mc_j z_j^x,
\end{equation}
with pairwise different $z_{j} := \exp(T_{j})$ and with parameters $c_{j} \in {\mathbb C} \setminus  \{ 0 \}$ and $T_{j} \in {\mathbb C}$.
Using the classical Prony method, the parameters $c_{j}$ and $z_{j}$ can be computed from the $2M$ equidistant  samples $f(\ell)$, $\ell=0, \ldots , 2M-1$, see e.g.\ \cite{PT14} or \cite{PPST18}, Chapter 10, and the references therein. Observe that, in order to extract $T_{j}$ from $z_{j}$ in a unique way, we need to restrict $\text{Im}\, T_{j}$ to an interval of length $2\pi$.

In practical applications, we have to take special care of the numerical instabilities that can occur using Prony's method.
There have been many attempts to provide improved numerical algorithms, including the Pisarenko method \cite{Pisarenko.1973},
MUSIC \cite{Schmidt.1986}, ESPRIT
\cite{Kailath.1990}, Matrix Pencil Methods \cite{Hua.1991} and the approximate Prony method \cite{Potts.2010}.
Furthermore, to ensure the consistency in case of noisy measurements, modifications of Prony's method have been proposed, see  e.g.\ \cite{BM86, Osborne.1995, LiS00, ZP18}.
The interest in Prony-like methods has been strongly increased during the last years, also because of their utilization for the recovery of signals of finite rate of innovation,  see e.g.\ \cite{Vetterli.2002, Dragotti.2007, Urigen.2013, BE19}. In particular, the close connection between the exponential sum in (\ref{expo0}) and the expansion into shifted Diracs
\begin{equation} \label{spike} s(t) = \sum_{j=1}^{M} c_{j} \, \delta(t-t_{j}) 
\end{equation}
with $c_{j} \in {\mathbb C} \setminus \{ 0 \}$ and $t_{j} \in {\mathbb R}$
is extensively used. Indeed the Fourier transform of $s(t)$ is of the form (\ref{expo0}), where $T_{j} = {\mathrm i} t_{j}$, and thus $s(t)$ can be reconstructed from only $2M$ of its Fourier samples, see also \cite{PPT11, PW13}. 
 Moreover, Prony's method and its generalizations provide new approaches for nonlinear sparse approximation of smooth functions, and there are close relations to optimal approximation of functions in Hardy spaces \cite{ACH11, BM05, PP16, PP19}. 
\smallskip

An essential extension of the classical Prony method has been proposed in \cite{Peter.2013b}, where the recovery of expansions into exponentials has been generalized to
the recovery of expansions into  eigenfunctions of linear operators. 

Let us assume that $A: V \to V$ is a linear operator on a normed vector space $V$, and let $\sigma(A)$ be a subset of the point spectrum of $A$ that contains pairwise different eigenvalues.
Further, we consider the corresponding set of eigenfunctions $v_{\lambda}$  of $A$  such that $v_{\lambda}$ can be uniquely identified by $\lambda \in \sigma(A)$. In other words, the eigenspace to  $\lambda$ is fixed as a one-dimensional space.
Then, the generalized Prony method in  \cite{Peter.2013b} allows the reconstruction of expansions $f$ of the form 
\begin{equation}\label{expog1} f = \sum_{j=1}^{M} c_{j} \, v_{\lambda_{j}} 
\end{equation}
with $c_{j} \in {\mathbb C} \setminus \{ 0 \}$ and with pairwise distinct $\lambda_{j} \in \sigma(A)$. According to \cite{Peter.2013b}, the eigenvalues $\lambda_{j}$ belonging to the ``active'' eigenfunctions $v_{\lambda_{j}}$ and the coefficients $c_{j}$, $j=1, \ldots , M$
can be uniquely recovered from the (complex) values $F(A^{\ell}f)$, $\ell=0, \ldots , 2M-1$, where $F:V \to {\mathbb C}$ is a functional that can be chosen arbitrarily up to the condition $F v_{\lambda} \neq 0$ for all $\lambda \in \sigma(A)$. 
The expansion into exponentials in (\ref{expo0}) can be seen as a special case of  (\ref{expog1}) if we take $V= C({\mathbb R})$, $A=S_{1}$ with the shift operator given by  $S_{1} f := f(\cdot +1)$, and the point evaluation functional $F f := f(0)$. Indeed, the exponentials $\exp(T_{j} \cdot)$ are eigenfunctions of $S_{1}$ to the eigenvalues $\exp(T_{j})$ which are pairwise different for $T_{j} \in {\mathbb R} + {\mathrm i} [-\pi, \pi)$. The needed samples $F(A^{\ell}f)$ are in this case of the form $F(A^{\ell}f)= F(S_{1}^{\ell}f) = F(f(\cdot + \ell)) = f(\ell)$.

There have been other attempts to generalize the idea of Prony's method to different expansions, including sparse polynomials \cite{BT88}, piecewise sinusoidal signals \cite{BDB10}, sparse expansions into Legendre polynomials \cite{PPR13} or Chebyshev polynomials \cite{Potts.2014} and into Lorentzians \cite{BSBV17}.
All these expansions can be also recovered directly using the approach in \cite{Peter.2013b}. 
An  extension of the generalized Prony method to the multivariate case based on Artinian Gorenstein algebras and the flat extension principle has been given by Mourrain \cite{Mourrain.2017}.
\medskip

However, the generalized Prony method is not always simple to apply since it requires the computation of higher powers of the operator $A$ in order to achieve the needed sample values $F(A^{\ell}f)$ for the reconstruction procedure. While for shift operators these samples are easy to acquire, the problem is much more delicate for differential or integral operators of higher order.
Indeed, the shift operator $S_{\tau}$, with $S_{\tau} f := f( \cdot + \tau)$, and its generalizations play a special role, since the power $S_{\tau}^{\ell}$ is  equivalent to  $S_{\ell \tau}$, i.e., to a simple shift operator with  shift length $\ell \tau$. Expansions into eigenfunctions of generalized shift operators are therefore of special interest, since they can be recovered just by suitable function samples, see \cite{PSK18}.
\smallskip

%
In this paper, we reconsider the generalized Prony method in \cite{Peter.2013b} in more detail and particularly study two extensions that provide us more freedom in data acquisition for the recovery of expansions of the form (\ref{expog1}).

The first extension is based on the observation that for a given linear operator $A$ there is often a different linear operator $B$ that possesses the same eigenfunctions to different eigenvalues.
For example, the exponential function $\exp(Tx)$ is an eigenfunction of the shift operator $S_{\tau}$  to the eigenvalue ${\mathrm e}^{\tau T}$, 
but at the same time also an eigenfunction of the differential operator $\frac{\mathrm d}{{\mathrm d} x}$  to the eigenvalue $T$.
 Thus, we need to understand, how this observation can help us to solve the signal recovery problem,
 and in particular, for a given linear operator A, how to find a different linear operator 
$B$  with the same eigenfunctions that may be easier to apply.

The second extension directly aims at generalizing the sampling functional $F$. While it is appealing that the $2M$ parameters of the signal model in (\ref{expo0}) and (\ref{expog1}) can be theoretically obtained from only $2M$ samples, in many applications we are faced with a parameter identification problem, where a large number of noisy samples is given, and we need to identify the parameters in a stable manner.
Therefore, we go away from sampling schemes that use a minimal number of sampling values being ordered in matrices with Hankel structure.
We will show that there is much more freedom to choose a set of different sampling functionals $F_{k}$, where each sampling functional leads to a linear equation providing one condition for the vector of coefficients of the Prony polynomial. 
Our approach also covers previous ideas to identify the frequency parameters $T_{j}$ of the exponential sum in (\ref{expo0}) using equispaced sampling sequences with different sampling sizes simultaneously, see \cite{CL18}.
\smallskip

Our ideas to provide simple acquisition schemes to recover expansions into eigenfunctions of linear operators also open the way for new approaches for sparse nonlinear approximation of (non-stationary) signals and images.

The paper is organized as follows. In Section \ref{sec_GPM} we reconsider the Prony method for exponential sums. We first show, how it can be understood as a method to recover a sparse expansion into eigenfunctions of the shift operator $S_{\tau}$ on the one hand and of the differential operator $\frac{\mathrm d}{{\mathrm d} x}$ on the other hand. In Section 2.3 we employ an exponential operator notation to show how the two operators $S_{\tau}$  and $\frac{\mathrm d}{{\mathrm d} x}$ are related to each other. Further, we introduce the idea, how the sampling scheme can be generalized using a set of different sampling functionals $F_{k}$ instead of $F(A^{k})$.

Section 3 is devoted to the new  generalized operator based Prony method (GOP).  We start with recalling the generalized Prony method from \cite{Peter.2013b} and transfer it into our new notation.
Sections 3.2 and 3.3 are concerned with  the two new extensions, first the change of operators from $A$ to $\varphi(A)$, where $\varphi$ is an analytic function, and second the generalization of the sampling scheme. 
In particular, we introduce admissible sets of sampling functionals $F_{k}$ that allow a unique reconstruction of expansions of the form (\ref{expog1}). 
In Section 3.4 we give a detailed example,  where  GOP is applied to sparse cosine expansions.

In Section 4, we discuss the application of  GOP for the recovery of eigenfunctions of differential operators. We show that special linear differential operators of first and second order lead by a transfer from the operator $A$ to  $\varphi(A)$ (with an exponential map $\varphi$) to generalized shift operators whose powers can be simply evaluated in sampling schemes.

Section 5 is devoted to a further investigation of the second extension, the generalized sampling.
We embed the functions $f$ in (\ref{expog1}) into a suitable Hilbert space and employ a dual approach for the sampling scheme. Then, our sampling functionals $F_k: V \to {\mathbb C}$ can be written as inner products with special kernels $\phi_{k}$ as Riesz representers, i.e., $ F_{k} (f) = \langle f , \phi_{k} \rangle$. Therefore the application of $F_{k}$ to powers $A^{\ell}f$ or $(\varphi(A))^{\ell} f$ to obtain the required sampling values can be rewritten by applying powers of the adjoint operator $A^{*}$ to the kernel $\phi_{k}$.  
In this way, we are able to find admissible sampling schemes for the recovery of expansions into eigenfunctions of differential operators in terms of moments.
We demonstrate the principle for the recovery of exponential sums and for the recovery of sparse Legendre expansions using only moments of $f$. 

The considerations in this paper provide the starting point for further studies that focus on the improvement of the numerical stability of the generalized Prony method. But this problem is beyond the scope of this paper and will be the further investigated.

\section{An introductory example: Revisiting Prony's method using shift and differential operator}
\label{sec_GPM}
\setcounter{equation}{0}

\subsection{Prony's method based on the shift operator}
\label{subsecshift}

The classical Prony method  is a way to reconstruct  the parameters  $c_{j} \in {\mathbb C} \setminus \{ 0 \}, \, T_{j} \in {\mathbb C}$, $j=1, \ldots , M$,  of  the weighted sum of exponentials 
\begin{equation}\label{expo} f(x) = \sum_{j=1}^{M} c_{j} \, \exp(T_{j}x).
\end{equation}
Using equidistant sample values $f(k)$, $k=0, \ldots , 2M-1$,  exact recovery  is possible if $T_{j} \in  {\mathbb R} + {\mathrm i} [-\pi, \, \pi)$, see e.g. \cite{PT14}.
Usually, we assume that there is an a priori known bound $C$ such that 
$\text{Im} \, T_{j} \in [-C\pi, \, C\pi)$, and the parameters $T_{j}$ can still be recovered using 
a rescaling argument and taking sampling values $f(kh)$ with  $h \le 1/C$ instead of $h=1$.
With 
\begin{align*}
\mathcal{M}:=\left\{\sum\limits_{j=1}^Mc_j\,e^{T_jx}: \, M<\infty,
c_j \in {\mathbb C} \setminus \{ 0 \},  T_j \in {\mathbb R} + {\mathrm i} [-C\pi, \, C\pi),  \forall j\neq i:T_j\neq T_i,\right\}
\end{align*}
we denote the model class of all finite linear combinations of complex exponentials that can be recovered by Prony's method.

Recalling the ideas in \cite{Peter.2013b,PSK18}, we can reinterpret and generalize the method using a shift operator.
The exponential sum in (\ref{expo}) can be understood as an expansion into $M$ eigenfunctions of the shift operator $S_{\tau}: C({\mathbb R}) \to C({\mathbb R})$ for some $\tau \neq 0$ with $S_{\tau} f (x) := f(x+\tau)$.
More precisely, we observe that 
$$ (S_{\tau} \exp(T_{j} \cdot))(x) = \exp(T_{j}(x + \tau)) = \exp(T_{j}\tau ) \, \exp(T_{j} x), $$
i.e., the exponentials $\exp(T_{j}x)$   occurring in (\ref{expo})  are eigenfunctions of $S_{\tau}$ to the eigenvalues $\exp(T_{j}\tau)$. This implies 
$$ (S_{\tau} -\exp(T_{j} \tau) I) \exp(T_{j} \cdot) = 0, $$
where $I$ denotes the identity operator. We define the Prony polynomial 
$$
 P(z) = P_{\tau}(z)  := \prod_{j=1}^{M} (z- \exp(T_{j}\tau)) 
$$
with the monomial representation
$$ P(z) = \sum_{\ell=0}^{M} p_{\ell} z^{\ell} = z^{M} + \sum_{\ell=0}^{M-1} p_{\ell} \, z^{\ell} $$
and observe for $f$ in (\ref{expo}) that 
\begin{eqnarray*}
P(S_{\tau}) f &=& \sum_{\ell=0}^{M} p_{\ell} S_{\tau}^{\ell} f
= \sum_{\ell=0}^{M} p_{\ell} S_{\tau}^{\ell} \sum_{j=1}^{M} c_{j} \exp(T_{j} \cdot) \\
&=& \sum_{j=1}^{M} c_{j} \, \exp(T_{j} \cdot) \Big( \sum_{\ell=0}^{M} p_{\ell} \, \exp(T_{j}\tau  \ell) \Big) \\
&=& \sum_{j=1}^{M} c_{j} \, \exp(T_{j} \cdot) \, P(\exp(T_{j}\tau)) = 0. 
\end{eqnarray*}
Thus, $f$ solves the difference equation $P(S_{\tau}) f = 0$.
In particular, we also have
$$ S_{\tau}^{k} \, P(S_{\tau}) f = P(S_{\tau}) \, S_{\tau}^{k} \, f =  \sum_{\ell=0}^{M} p_{\ell } \, S_{\tau}^{\ell+k}f =0, \qquad k \in {\mathbb Z}. $$
We fix an arbitrary value $x_{0} \in {\mathbb R}$ and employ the point evaluation functional $F_{x_{0}}$ with $F_{x_{0}} f := f(x_{0})$ to compute the  samples $F_{x_{0}} S_{\tau}^{k} f = f(x_{0}+ \tau k)$, $k=0, \ldots , 2M-1$.
Then we obtain the homogeneous equation system 
\begin{equation}
\label{2.s1} F_{x_{0}} ( S_{\tau}^{k} \, P(S_{\tau}) f)  = \sum_{\ell=0}^{M} p_{\ell} f(x_{0}+ \tau(k+\ell)) = 0, \quad k=0, \ldots , M-1,
\end{equation}
for the vector ${\mathbf p} = (p_{0}, \ldots , p_{M})^{T}$ of coefficients of $P(z)$.
For $f \in {\mathcal M}$ and fixed $\tau < C^{-1}$ the arising coefficient matrix $( f(x_{0}+\tau(k+ \ell)))_{k=0,\ell=0}^{M-1,M} \in {\mathbb C}^{M \times M+1}$ is of Hankel structure and has full rank $M$, see  \cite{Peter.2013b,PT14}. Thus, ${\mathbf p}$ is uniquely defined with $p_{M}=1$, and we can extract the zeros $\exp(T_{j}\tau)$ of the polynomial $P(z)$ and compute $T_{j}$, $j=1, \ldots , M$. Finally, the vector of coefficients ${\mathbf c} =(c_{j})_{j=1}^{M}$ in (\ref{expo}) can be computed as a least squares solution of  the Vandermonde system
$$ {\mathbf V}_{2M,M} \, {\mathbf c} = (S_{\tau}^{k} f(x_{0}))_{k=0}^{2M-1}
= (f(x_{0}+\tau k))_{k=0}^{2M-1} $$
with ${\mathbf V}_{2M,M} := (\exp(T_{j}(x_{0}+\tau k))_{k=0, j=1}^{2M-1, M}$.

\subsection{Prony's method based on the differential operator}
\label{subsecdif}

We now present a different  viewpoint  and interpret $f(x)$ in (\ref{expo}) as the solution of  a linear  ordinary differential equation of order $M$.
In fact, the functions $\exp(T_{j}x)$ are also eigenfunctions of the  first derivative operator $ \frac{{\mathrm d}}{{\mathrm d} x}: C^{\infty}({\mathbb R}) \to C^{\infty}({\mathbb R})$, i.e., 
$$ \left(\frac{{\mathrm d}}{{\mathrm d} x} \, \exp(T_{j} \cdot) \right) (x)  = T_{j} \, \exp(T_{j}x), $$
and thus 
$$
\left(\frac{{\mathrm d}}{{\mathrm d} x} - T_j I\right) \, \exp(T_j \cdot)=0
$$
for all $T_{j} \in {\mathbb C}$, where $I$ denotes the identity operator.
We can now proceed similarly as before just by replacing the shift operator with the differential operator.
Employing the eigenvalues $T_{j}$, we define the characteristic polynomial 
$$
\tilde{P}(z) :=  \prod_{j=1}^{M} ( z-T_{j}) 
= \sum_{\ell=0}^{M} \tilde{p}_{\ell} \, z^{\ell} = z^{M} + \sum_{\ell=0}^{M-1} \tilde{p}_{\ell} \, z^{\ell}.
$$
We apply the corresponding linear differential operator $\tilde{P}\left(\frac{{\mathrm d}}{{\mathrm d} x}\right)$ of order $M$ 
 to the function $f$ in (\ref{expo}) and find 
\begin{eqnarray*}
\tilde{P}\left(\frac{\mathrm d}{{\mathrm d}x}\right) f &=& \prod_{j=1}^{M} \Big( \frac{\mathrm d}{{\mathrm d}x} - T_{j}I \Big)\, f 
= \Big( \sum_{\ell=0}^{M} \tilde{p}_{\ell} \, \frac{{\mathrm d}^{\ell}}{{\mathrm d}x^{\ell}} \Big)\, f \\
&=& \sum_{\ell=0}^{M} \tilde{p}_{\ell} \, \sum_{j=1}^{M} c_{j} \, T_{j}^{\ell} \, \exp(T_{j} \cdot) = \sum_{j=1}^{M} c_{j} \, \exp(T_{j} \cdot) \Big( \sum_{\ell=0}^{M} \tilde{p}_{\ell} \, T_{j}^{\ell} \Big) \\
&=&  \sum_{j=1}^{M} c_{j} \, \exp(T_{j} \cdot) \, \tilde{P}(T_{j}) = 0, 
\end{eqnarray*}
i.e., $f$ in (\ref{expo}) solves the homogeneous differential equation $\tilde{P}\left(\frac{\mathrm d}{{\mathrm d}x}\right) f= 0$.
We particularly observe that 
$$
\frac{{\mathrm d}^k}{{\mathrm d} x^k} \tilde{P}\left(\frac{\mathrm d}{{\mathrm d}x}\right)f= \tilde{P}\left(\frac{\mathrm d}{{\mathrm d}x}\right) \, f^{(k)} =0
$$
for all $k \in {\mathbb N}$, where $f^{(k)}$ denotes the $k$-th derivative of $f$.
As before, we can exploit this observation in order to reconstruct the parameters $c_{j}$ and $T_{j}$, $j=1, \ldots , M$, that identify $f$.
We fix a value $x_{0} \in {\mathbb R}$ and
apply the point evaluation functional $F_{x_{0}}$ with $F_{x_{0}} f = f(x_{0})$ to obtain the equations  
\begin{equation}\label{2.9} F_{x_{0}} \left( \frac{{\mathrm d}^k}{{\mathrm d} x^k} \tilde{P}\left(\frac{\mathrm d}{{\mathrm d}x}\right)f \right)= F_{x_{0}} \left( \tilde{P}\left(\frac{\mathrm d}{{\mathrm d}x}\right) \, f^{(k)} \right) = \sum_{\ell=0}^{M} \tilde{p}_{\ell} \, f^{(k+\ell)}(x_{0}) =0,
\end{equation}
for $k=0, \ldots, M-1$. This homogeneous linear equation system yields the vector $\tilde{\mathbf p}=(\tilde{p}_{0}, \ldots, \tilde{p}_{M})$ of coefficients of the Prony polynomial $\tilde{P}(z)$. Also here, the arising Hankel matrix $(f^{(k+\ell)}(x_{0}))_{k=0,\ell=0}^{M-1,M}$ has full rank $M$, such that $\tilde{\mathbf p}$ is uniquely defined with $\tilde{p}_{M}=1$, see \cite{Peter.2013b}.
In turn we find the zeros $T_{j}$, $j=1, \ldots , M$, of $\tilde{P}(z)$. Now the coefficients $c_{j}$ can be obtained by solving the overdetermined linear system 
$$ \sum_{j=1}^{M} c_{j} T_{j}^{k} \, \exp(T_{j}x_{0}) = f^{(k)}(x_{0}), \qquad k=0, \ldots , 2M-1.$$

\subsection{Generalization 1: Switch between operators with the same eigenfunctions}
\label{subsec:gen1}

 An essential difference between the two approaches is that the required input values have completely different structure. Instead of the derivative values $f^{(k)}(x_{0})$ for some $x_{0} \in {\mathbb R}$ and $k=0, \ldots , 2M-1$ for $\frac{\mathrm d}{{\mathrm d}x}$, we  just need to provide the function values $f(x_{0}+ k\tau)$, $k=0, \ldots , 2M-1$ for $S_{\tau}$.

The second essential difference regards the condition of the matrices involved into the method.
For $ \frac{\mathrm d}{{\mathrm d}x}$ we have to find the zero eigenvector of the Hankel matrix $\tilde{\mathbf H} = (f^{(k+\ell)}(x_{0}))_{k=0,\ell=0}^{M-1,M} \in {\mathbb C}^{M \times M+1}$. Using the structure of $f(x)$ in (\ref{expo}), $\tilde{\mathbf H}$ has the factorization
$$ \tilde{\mathbf H} = \tilde{\mathbf V}_{M,M} \, \text{diag} ( c_{1}, \ldots , c_{M}) \,  \text{diag} ( \exp(T_{1}x_{0}), \ldots , \exp(T_{M} x_{0})) \, \tilde{\mathbf V}_{M+1,M}^{T},
$$
with the Vandermonde matrices $\tilde{\mathbf V}_{M,M} = (T_{j}^{\ell})_{\ell=0,j=1}^{M-1,M}$ and $\tilde{\mathbf V}_{M+1,M} = (T_{j}^{\ell})_{\ell=0,j=1}^{M,M}$.
In contrast, for $S_{\tau}$ we have instead to solve the eigenvalue problem with the Hankel matrix ${\mathbf H} = (f(x_{0}+\tau(k+\ell)))_{k=0,\ell=0}^{M-1,M}$ with the factorization 
$$ {\mathbf H} = {\mathbf V}_{M,M} \, \text{diag} ( c_{1}, \ldots , c_{M}) \,  \text{diag} ( \exp(T_{1}x_{0}), \ldots , \exp(T_{M} x_{0})) \, {\mathbf V}_{M+1,M}^{T},
$$
where ${\mathbf V}_{M,M}= (\exp(T_{j} \tau \ell))_{\ell=0,j=1}^{M-1,M}$ and ${\mathbf V}_{M+1,M}= (\exp(T_{j} \tau \ell))_{\ell=0,j=1}^{M,M}$.
Depending on the range of the parameters $T_{j}$ the occurring Vandermonde matrices
can have completely different condition number. If e.g.\ $T_{j} = {\mathrm i} \, \text{Im}\, T_{j}$, then the knots $\exp(T_{j}\tau)$ determining ${\mathbf V}_{M,M}$ lie on the unit circle while the
$T_{j}$ determining $\tilde{\mathbf V}_{M,M}$ lie on the imaginary axis.

We are therefore interested  in understanding the connection between the two methods to recover (\ref{expo}).
Both approaches work, since the exponentials $\exp(T_{j} x)$ are eigenfunctions to the two different operators 
$S_{\tau}$ and $\frac{{\mathrm d}}{{\mathrm d} x}$. But  the corresponding spectra 
are different. While the eigenvalues with regard to the differential operator 
$\frac{{\mathrm d}}{{\mathrm d} x}$  are of the form $T_{j}$, for the shift operator $S_{\tau}$ the eigenvalues are $\exp(T_{j}\tau)$. 
Obviously, the spectra are connected by the map $\exp(\tau \cdot): \, \lambda  \to \exp(\lambda \tau)$. 
With $\exp z = \sum\limits_{k=0}^{\infty} \frac{z^{k}}{k!}$ we indeed have 
\begin{align}
\nonumber
\exp{\left(\tau\frac{\mathrm d}{{\mathrm d}x}\right)} \exp(Tx)
&=\sum\limits_{k=0}^{\infty}\frac{\tau^k}{k!}\frac{{\mathrm d}^k}{{\mathrm d}x^k} \, \exp(Tx)
 =\sum\limits_{k=0}^{m}\frac{\tau^k}{k!} \, T^{k} \, \exp(Tx)
  \\
 \label{th_shift_exponential}
&= \exp(\tau T) \, \exp(Tx) =S_{\tau} \, \exp(Tx)
\end{align}
for all  $T \in {\mathbb C}$, and in turn for any analytic function $f\in {\mathcal M}$ 
$$\exp\left(\tau\frac{\mathrm d}{{\mathrm d}x}\right)f(x)=f(\tau+x) = (S_{\tau} \, f)(x), $$
see \cite{Dattoli.1997}.
Thus, using the analytic function $\exp(\tau \cdot)$, we can map from the differential operator $\frac{\mathrm d}{{\mathrm d}x}$ to the shift operator $S_{\tau}$, thereby staying with the same eigenfunctions but changing the eigenvalues.
This observation is summarized in the following Theorem.

\begin{theorem}
\label{th_spectral_mappingex}
Let  $\frac{\mathrm d}{{\mathrm d}x}: C^{1}({\mathbb R}) \to C({\mathbb R})$ be the  first derivative operator. Then, each $T\in {\mathbb C}$  is an eigenvalue of $\frac{\mathrm d}{{\mathrm d}x}$.      
For some $C >0$ let  $\Lambda_{C}:= {\mathbb R} + {\mathrm i} \, [-C \pi, \, C \pi)$ be a given subset of ${\mathbb C}$, and let $\varphi_{\tau}(x):=\exp(\tau x)$ with $\tau \le C^{-1}$.
Then $\varphi_{\tau}$ is well-defined on ${\mathbb C}$ and 
$$ \left( \frac{\mathrm d}{{\mathrm d}x} - T \, I \right) \, \exp(T  x) = 0 $$
implies 
$$ \varphi_{\tau} \left( \frac{\mathrm d}{{\mathrm d}x} \right) \exp(Tx)  - \varphi_{\tau}\left( T \, I \right) \exp(Tx) =  (S_{\tau} - \exp( \tau \, T) \, I) \, \exp(T  x) =0, $$
where $S_{\tau}$ is the shift operator as before.
Furthermore, the map $\varphi_{\tau}: T \to \exp(\tau \, T)$ is injective on $\Lambda_{C}$.
\end{theorem}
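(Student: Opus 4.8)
The statement decomposes into four claims that can be treated separately, none of them requiring heavy machinery. For the assertion that every $T \in {\mathbb C}$ is an eigenvalue of $\frac{\mathrm d}{{\mathrm d}x}$, I would simply point to $\exp(T \cdot) \in C^{1}({\mathbb R})$: it is a nonzero function with $\frac{\mathrm d}{{\mathrm d}x}\exp(Tx) = T\exp(Tx)$, i.e.\ $(\frac{\mathrm d}{{\mathrm d}x} - TI)\exp(Tx) = 0$. That $\varphi_{\tau} = \exp(\tau \cdot)$ is well-defined on all of ${\mathbb C}$ is immediate, since the exponential series has infinite radius of convergence, so $\varphi_{\tau}$ is entire; the bound $\tau \le C^{-1}$ plays no role here and enters only in the injectivity part.

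For the operator identity, the plan is to reuse the computation already recorded in (\ref{th_shift_exponential}). Inserting the power series of $\varphi_{\tau}$ and applying it termwise to $\exp(T\cdot)$, the relation $\frac{{\mathrm d}^{k}}{{\mathrm d}x^{k}}\exp(Tx) = T^{k}\exp(Tx)$ turns the series into $\big( \sum_{k \ge 0}\frac{\tau^{k}}{k!}T^{k} \big)\exp(Tx) = \exp(\tau T)\exp(Tx)$, which by (\ref{th_shift_exponential}) equals $S_{\tau}\exp(Tx)$. Hence $\varphi_{\tau}(\frac{\mathrm d}{{\mathrm d}x})\exp(Tx) = S_{\tau}\exp(Tx)$, whereas $\varphi_{\tau}(TI)\exp(Tx) = \exp(\tau T)\exp(Tx)$ directly from the definition of an analytic function of the scalar operator $TI$; subtracting and using $S_{\tau}\exp(Tx) = \exp(\tau T)\exp(Tx)$ establishes the claimed chain of equalities, with all three expressions equal to $0$.

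The only claim requiring a genuine argument — and hence the main obstacle — is the injectivity of $\varphi_{\tau}$ on $\Lambda_{C} = {\mathbb R} + {\mathrm i}\,[-C\pi, C\pi)$. Suppose $T_{1}, T_{2} \in \Lambda_{C}$ with $\exp(\tau T_{1}) = \exp(\tau T_{2})$. Then $\exp(\tau(T_{1}-T_{2})) = 1$, hence $\tau(T_{1}-T_{2}) = 2\pi{\mathrm i}k$ for some $k \in {\mathbb Z}$. Comparing real parts gives $\text{Re}\,T_{1} = \text{Re}\,T_{2}$, and comparing imaginary parts gives $\text{Im}\,T_{1} - \text{Im}\,T_{2} = 2\pi k/\tau$. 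Since $\text{Im}\,T_{1}$ and $\text{Im}\,T_{2}$ lie in the half-open interval $[-C\pi, C\pi)$ of length $2C\pi$, we obtain the strict bound $|\text{Im}\,T_{1} - \text{Im}\,T_{2}| < 2C\pi$, while $0 < \tau \le C^{-1}$ forces $|2\pi k/\tau| \ge 2\pi C|k| \ge 2C\pi$ for every $k \ne 0$. These are incompatible unless $k = 0$, in which case $T_{1} = T_{2}$. The delicate point — the analogue of restricting $\text{Im}\,T_{j}$ to an interval of length $2\pi$ in the classical Prony setting of the introduction — is that both the half-openness of $[-C\pi, C\pi)$ and the sharp inequality $\tau \le C^{-1}$ are genuinely needed to exclude the borderline case $|k| = 1$.
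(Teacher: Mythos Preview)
Your proposal is correct and follows essentially the same route as the paper: both verify the eigenfunction relation directly, defer the operator identity to the computation in (\ref{th_shift_exponential}), and establish injectivity by observing that $\exp(\tau T_{1}) = \exp(\tau T_{2})$ forces $T_{1} - T_{2} = 2\pi{\mathrm i}k/\tau$ and then using the width of $\Lambda_{C}$ together with $\tau \le C^{-1}$ to force $k = 0$. Your write-up is simply more explicit than the paper's on this last point; the underlying argument is identical.
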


\begin{proof}
Obviously, $\frac{\mathrm d}{{\mathrm d}x} \, \exp(T x) = T \, \exp(T  x)$ for all $T \in {\mathbb C}$.
For all $T \in \Lambda_{C}$  the value $\varphi_{\tau}(T) = \exp( \tau \, T)$ is well-defined, and $\varphi_{\tau}(T_{1})=  \varphi_{\tau}(T_{2})$ yields
$T_{1} = T_{2} + \frac{2 \pi k {\mathrm i}}{\tau}$, $k \in {\mathbb Z}$, i.e., $T_{1} = T_{2}$ for $T_{1}, \, T_{2} \in \Lambda_{C}$.
The remaining assertions follow from (\ref{th_shift_exponential}). 
\end{proof}
\medskip

Theorem \ref{th_spectral_mappingex} has strong  implications on the reconstruction of $f(x)$ in (\ref{expo}) using Prony's method.  We can  
replace the operator $\frac{\mathrm d}{{\mathrm d}x}$ by the operator $S_{\tau}$ in order to reconstruct $f$ in (\ref{expo}), as we have seen in the previous two subsections.

\subsection{Generalization 2: Changing the sampling scheme}
\label{subsec:gen2}

In the two previous examples  in Subsections \ref{subsecshift} and \ref{subsecdif} we have applied the point evaluation functional  $F_{x_{0}}$ with some $x_{0} \in {\mathbb R}$ and used the samples 
$$ F_{x_{0}} (S_{\tau}^{k} \, f) = f(x_{0}+ k \tau) \quad \text{and} \quad F_{x_{0}} \left( \frac{\mathrm d^{k}}{{\mathrm d}x^{k}} f \right) = f^{(k)} (x_{0}), \quad k=0, \ldots , 2M-1, $$
respectively, to recover $f \in {\mathcal M}$.  According to \cite{Peter.2013b}, we can however  use any other linear functional $F: C^{\infty} \to {\mathbb C}$ with the only restriction that $F$ applied to the eigenfunctions $\exp(Tx)$ should be well-defined and nonzero for all $T$ in the parameter range we are interested in.
We can for example take
$$ F f = \int_{\Omega} f(x) \, K(x) \, {\mathrm d} x $$ 
with some $\Omega \subset {\mathbb R}$ and some rather arbitrary kernel function $K(x)$
such that $F f$ is  well defined and $\int_{\Omega} \exp(Tx) \, K(x) \, {\mathrm d} x  \neq 0$ for all $T \in {\mathbb C}$.   Thus, the choice of $F$ gives us already some freedom to choose the sampling scheme.
Taking e.g.\ $K(x) = \sum_{r=-L}^{L} w_{r} \delta(x - r\tau)$ with the delta distribution $\delta$ and some positive weights $w_{r}$ or just $K(x) := \chi_{[-1/2,1/2)}(x)$ we arrive at 
smoothed sampling values 
$$ F(S_{\tau}^{k} f) = \sum_{r=-L}^{L} w_{r} \, f((k+r)\tau) \quad \text{or}  \quad
F(S_{\tau}^{k} f) = \int_{-1/2}^{1/2} f(x+\tau k) \, {\mathrm d}x  $$
instead of $f(x_{0}+\tau k)$ for $ k=0, \ldots , 2M-1$.
\smallskip

We can now generalize the sampling scheme even further if we allow ourselves to employ more than the minimal number of $2M$ input data.
We inspect again the equations
$$ F_{x_{0}} ( S_{\tau}^{k} \, P(S_{\tau}) f) =0, \qquad k=0, \ldots , M-1,$$
that lead in (\ref{2.s1}) to the Hankel system determining the coefficient vector ${\mathbf p}$ of the Prony polynomial $P(z)$. 
We already have $P(S_{\tau}) f = 0$, and the application of $S_{\tau}^{k}$  does not change the right-hand side of the equation.
Therefore, for each $k=0, \ldots , M-1$, we can replace $F_{x_{0}} S_{\tau}^{k}$ by a new linear functional $F_{k}$ to obtain the $M$ equations to recover ${\mathbf p}$. 
We only need to pay attention that the obtained $M$ equations are linearly independent.
\medskip

For example, we could take $F_{k} = F_{x_{0}}\, S_{\theta}^{k}$ with a parameter $\theta \not\in \{0, \, \tau \}$ and obtain an equation system 
$$ F_{x_{0}} (S_{\theta}^{k} \, P(S_{\tau}) f) = \sum_{\ell=0}^{M} p_{\ell} f(x_{0}+ k\theta+ \ell\tau) = 0, \quad k=0, \ldots , M-1. $$
The arising coefficient matrix $(f(x_{0} + k \theta + \ell\tau))_{k=0,\ell=0}^{M-1,M}$  does not longer  have Hankel structure  but may  possess a  better condition  than $(f(x_{0}+ (k+\ell)\tau))_{k=0,\ell=0}^{M-1,M}$. Taking e.g.\  $\theta = 2 \tau$ we need the $3M-1$ sample values $f(x_{0}+\tau(2k+\ell))$ to recover $f$ in (\ref{expo}).

\smallskip

Considering the method in Section \ref{subsecdif}, we can also replace the  functionals $F_{x_{0}} \frac{\mathrm d^{k}}{{\mathrm d}x^{k}}$ in (\ref{2.9}) by other linear functionals $F_{k}$. Taking for example 
 $F_{k}= F_{x_{0}} \, S_{\tau}^{k}$ then we obtain the system
$$ F_{x_{0}} \left( S_{\tau}^{k} \tilde{P} \left(\frac{\mathrm d}{{\mathrm d}x} \right) f \right) = \sum_{\ell=0}^{M} p_{\ell} \, f^{(\ell)}(x_{0} + \tau k) =0, \qquad k=0, \ldots , M-1. $$
Here, we need now the input data $f^{(\ell)}(x_{0}+ k \tau)$, $k=0, \ldots , M-1, \, \ell=0, \ldots , M$, using only derivatives up to order $M$ and its equidistant shifts.
In Section \ref{sec3.3} and in Section \ref{sec_generalized-sampling}  we will investigate such generalized sampling schemes in more detail and particularly show that the examples above provide sampling matrices of full rank $M$, such that $f$ in (\ref{expo}) can be uniquely reconstructed. 

\begin{remark}
1. Special generalized sampling schemes for the shift operator and the differential operator have also been proposed by Seelamantula \cite{Seelamantula.2015}, but without considering the relations between these operators.  However, a rigorous investigation of rank properties of the involved matrices  has not been given in \cite{Seelamantula.2015}. The representation of Prony's method as an approach to reconstruct expansions into eigenfunctions of linear operators has been given already in \cite{Peter.2013b}.

2. For the special case  of recovery of  expansions into shifted Diracs  in (\ref{spike}), it has been  extensively studied  how to retrieve  the needed Fourier samples from  low-pass projections  with suitable sampling kernels, see e.g. \cite{Vetterli.2002, Dragotti.2007, BDB10, Urigen.2013, BSBV17, BE19}.

\end{remark}

\section{Generalized operator based Prony method}
\label{sec_GoProm}
\setcounter{equation}{0}

We  want to study the two new observations considered for the special operators $\frac{\mathrm d}{{\mathrm d}x}$ and $S_{\tau}$ in Subsections \ref{subsec:gen1} and \ref{subsec:gen2} in a more general setting. 
We will call the new method Generalized Operator based Prony Method (GOP). 
For that purpose, we start with recalling the generalized Prony method from \cite{Peter.2013b}. 

\subsection{Generalized Prony method}

Let $V$ be a normed vector space over ${\mathbb C}$
and let  $A: V \to V$  be a linear operator.
Assume that  $A$ possesses  a non-empty point spectrum $\sigma_{P}(A)$ and let $\sigma(A) \subset \sigma_{P}(A)$ be a (sub)set  with pairwise different 
eigenvalues of $A$. We assume further that there is a corresponding set of eigenfunctions, i.e., for each $\lambda \in \sigma(A) $
we have a $v_{\lambda} \in V$ with $A \, v_{\lambda} = \lambda v_{\lambda}$, and the mapping $\lambda \mapsto  v_{\lambda}$
is injective. In other words, the eigenspace to $\lambda$ is one-dimensional, or, if this is not the case, we have to determine  one relevant eigenfunction $v_{\lambda}$ corresponding to $\lambda$ in advance, which may occur in the expansion that we want to recover. Throughout the paper, we will assume that the considered eigenfunctions $v_{\lambda}$ are normalized, i.e., $\| v_{\lambda} \|_{V} = 1$.

We want to reconstruct  $M$-sparse expansions into eigenfunctions of $A$ of the form 
\begin{equation}\label{genexpo}
f = \sum_{j=1}^{M} c_{j} \, v_{\lambda_{j}} 
\end{equation}
where $\lambda_{j} \in  \sigma(A)$ and where we always assume $c_{j} \in {\mathbb C} \setminus \{ 0 \}$ for $j=1, \ldots , M$. 
The considered set of possible expansions is given as 
\begin{equation}\label{ma}
\mathcal{M}(A):=\left\{f=\sum\limits_{j=1}^{M}c_{j}\, v_{\lambda_{j}}:\; M<\infty, \, c_{j}\in\C\setminus \{ 0\}, \, 
\lambda_{j} \in \sigma(A), \lambda_{j} \neq \lambda_{k}\,  \textrm{for}\,  j \neq k  \right\}.
\end{equation}
The generalized Prony method in \cite{Peter.2013b} provides an algorithm to recover $f$ using only $2M$ complex measurements.
For that purpose, a linear functional $F: V \to {\mathbb C}$ is introduced that satisfies $F (v_{\lambda}) \neq 0$ for all $\lambda \in \sigma(A)$.

\begin{theorem}[Generalized Prony method \cite{Peter.2013b}]
\label{GPM}
With the assumptions above, the expansion $(\ref{genexpo})$  of eigenfunctions $v_{\lambda_{j}}$  of the linear operator $A$ can be uniquely reconstructed  from the values 
$F(A^{k}f)$, $k=0, \ldots , 2M-1$.
\end{theorem}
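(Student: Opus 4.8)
The plan is to mimic the two concrete computations of Section~\ref{sec_GPM}, replacing the operators $S_{\tau}$ and $\frac{\mathrm d}{{\mathrm d}x}$ by the abstract operator $A$, the exponentials by the eigenfunctions $v_{\lambda_{j}}$, and the point evaluation by the functional $F$. Concretely, I would associate to the (unknown) active eigenvalues $\lambda_{1},\dots,\lambda_{M}$ the generalized Prony polynomial
$$ P(z) := \prod_{j=1}^{M}(z-\lambda_{j}) = \sum_{\ell=0}^{M} p_{\ell}\, z^{\ell}, \qquad p_{M}=1. $$
Since $A^{\ell} v_{\lambda_{j}} = \lambda_{j}^{\ell} v_{\lambda_{j}}$ for every $\ell \ge 0$, applying $P(A)$ to $f$ in (\ref{genexpo}) gives
$$ P(A) f = \sum_{j=1}^{M} c_{j}\, P(\lambda_{j})\, v_{\lambda_{j}} = 0, $$
because $P(\lambda_{j})=0$ for all $j$, and hence also $A^{k} P(A) f = P(A)\, A^{k} f = 0$ for all $k \in {\mathbb N}_{0}$. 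Applying the linear functional $F$ and using linearity yields the homogeneous linear system
$$ \sum_{\ell=0}^{M} p_{\ell}\, F(A^{k+\ell} f) = 0, \qquad k=0,\dots,M-1, $$
which involves exactly the $2M$ available measurements $F(A^{k}f)$, $k=0,\dots,2M-1$.

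The key step is to show that this system determines $\mathbf p = (p_{0},\dots,p_{M})^{T}$ uniquely under the normalization $p_{M}=1$, i.e.\ that the square Hankel matrix $\mathbf H_{M} := \big(F(A^{k+\ell}f)\big)_{k,\ell=0}^{M-1}$ is nonsingular. Inserting (\ref{genexpo}) and using $F(A^{k+\ell}f) = \sum_{j=1}^{M} c_{j}\, \lambda_{j}^{k+\ell}\, F(v_{\lambda_{j}})$ gives the factorization
$$ \mathbf H_{M} = \mathbf V_{M}\, \text{diag}\big(c_{1}F(v_{\lambda_{1}}),\dots,c_{M}F(v_{\lambda_{M}})\big)\, \mathbf V_{M}^{T}, $$
with the Vandermonde matrix $\mathbf V_{M} = (\lambda_{j}^{k})_{k=0,j=1}^{M-1,M}$. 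Since the $\lambda_{j}$ are pairwise distinct, $\mathbf V_{M}$ is invertible; since $c_{j}\neq 0$ by assumption and $F(v_{\lambda_{j}})\neq 0$ by the choice of $F$, the diagonal factor is invertible as well, so $\mathbf H_{M}$ is invertible and $\mathbf p$ is the unique solution. Consequently $P$ is recovered from the data, its $M$ zeros are precisely $\lambda_{1},\dots,\lambda_{M}$, and by injectivity of $\lambda \mapsto v_{\lambda}$ the active eigenfunctions $v_{\lambda_{j}}$ are identified.

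It remains to recover the coefficients $c_{j}$. From $F(A^{k}f) = \sum_{j=1}^{M} c_{j} F(v_{\lambda_{j}})\, \lambda_{j}^{k}$, $k=0,\dots,2M-1$, one obtains an (overdetermined) Vandermonde system for the numbers $c_{j}F(v_{\lambda_{j}})$ whose $M\times M$ leading block is again the invertible matrix $\mathbf V_{M}^{T}$ above; hence these numbers are uniquely determined, and dividing by the known nonzero values $F(v_{\lambda_{j}})$ yields the $c_{j}$. I expect the only genuine obstacle to be the nonsingularity of $\mathbf H_{M}$, which is exactly the Vandermonde-type argument just sketched; the rest is bookkeeping. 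One should also note implicitly that the sparsity $M$ is treated as known (otherwise it is read off as the rank of a sufficiently large Hankel matrix assembled from the given samples, by the same factorization).
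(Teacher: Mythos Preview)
Your proof is correct and follows essentially the same approach as the paper: define the Prony polynomial from the active eigenvalues, derive the homogeneous Hankel system from $P(A)f=0$, establish its rank via the Vandermonde factorization $\mathbf H = \mathbf V\,\mathrm{diag}(c_{j}F(v_{\lambda_{j}}))\,\mathbf V^{T}$, and then solve a Vandermonde system for the coefficients. The only cosmetic difference is that the paper factors the full $M\times(M{+}1)$ Hankel matrix to show rank $M$, whereas you factor its square $M\times M$ leading block to show invertibility, which amounts to the same thing.
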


\begin{proof}
We give an outline of the proof in \cite{Peter.2013b} with our notation.
Observe that $f$ is completely  reconstructed if  we recover the subset $\Lambda_{f}:= \{ \lambda_{1}, \ldots , \lambda_{M} \} \subset \sigma(A)$ of ``active eigenvalues'' and the complex coefficients $c_{j}$, $j=1, \ldots , M$.
The eigenfunctions $v_{\lambda_{j}}$ are then uniquely determined by $\lambda_{j}$.

Let $P(z) = \prod_{j=1}^{M}(z- \lambda_{j}) = \sum_{\ell=0}^{M} p_{\ell} \, z^{\ell}$ 
 be the Prony polynomial determined by the set of $M$ pairwise different (unknown) active eigenvalues $\lambda_{j} \in \Lambda_{f}$, and 
${\mathbf p} = (p_{0}, \ldots , p_{M-1}, p_{M})^{T}$ with $p_{M}=1$ denotes the vector of its monomial coefficients. Then we obtain by (\ref{genexpo})
\begin{equation}\label{3.0} P(A f) = \prod_{k=1}^{M} (A - \lambda_{k} I) \, f = \sum_{j=1}^{M} c_{j} \prod_{k=1}^{M} (A - \lambda_{k} I) \, v_{\lambda_{j}} =0,
\end{equation}
and therefore
\begin{equation}\label{3.1} F(A^{k} \, P(A) \, f) = F \Big( A^{k} \Big( \sum_{\ell=0}^{M} p_{\ell} \, A^{\ell} f \Big) \Big) = \sum_{\ell=0}^{M} p_{\ell} \, F(A^{\ell+k} f) =0 
\end{equation}
for all $k \in {\mathbb N}$. Taking $M$ equations for $k=0, \ldots ,M-1$, is already sufficient  to recover the coefficient vector ${\mathbf p}$, since the matrix
$$ \left( F ( A^{\ell + k} \, f) \right)_{k=0,\ell=0}^{M-1,M}$$
has full rank $M$. This can be seen from the  factorization 
\begin{eqnarray} \nonumber
\left( F ( A^{\ell + k} \, f) \right)_{k=0,\ell=0}^{M-1,M} &=&
\Big( F ( A^{\ell + k} \sum_{j=1}^{M} c_{j} v_{\lambda_{j}}) \Big)_{k=0,\ell=0}^{M-1,M}
= \Big( \sum_{j=1}^{M} c_{j} F ( A^{\ell+k} v_{\lambda_{j}}) \Big)_{k=0,\ell=0}^{M-1,M} \\
\nonumber
&=& \Big( \sum_{j=1}^{M} c_{j} F( v_{\lambda_{j}}) \, \lambda_{j}^{\ell+k} \Big)_{k=0,\ell=0}^{M-1,M} \\
\label{2stern}
&=& V_{\Lambda_{f},M,M} \, \text{diag} \, (c_{j} \, F(v_{\lambda_{j}}))_{j=1}^{M} \, V_{\Lambda_{f},M+1,M}^{T} 
\end{eqnarray}
with the Vandermonde matrices 
$$ V_{\Lambda_{f},M,M} := (\lambda_{j}^{k})_{k=0,j=1}^{M-1,M}, \qquad V_{\Lambda_{f},M+1,M} := (\lambda_{j}^{k})_{k=0,j=1}^{M,M} $$
having full rank $M$.
Thus, we can first compute ${\mathbf p}$ as the right eigenvector of $(F(A^{\ell+k} f))_{k=0,\ell=0}^{M-1,M}$ to the eigenvalue $0$ with normalization $p_{M}=1$, determine $P(z)$, then  extract the zeros $\lambda$ of $P(z)$ to recover  $\lambda_{j}$, $j=1, \ldots ,M$,  and finally compute the coefficients $c_{j}$, $j=1, \ldots , M$, by solving  an overdetermined linear system of the form 
$$ F(A^{k} f) = \sum_{j=1}^{M} c_{j} \, \lambda_{j}^{k} \, F(v_{\lambda_{j}}), \qquad k=0, \ldots , 2M-1. $$
\end{proof}
\medskip

\begin{remark}
As shown in \cite{Peter.2013b} and \cite{PSK18}, many expansions fit into the scheme of Theorem \ref{GPM}. In Section \ref{sec_GPM} we have used $A$ to be the shift operator or the differential operator. Other examples in \cite{Peter.2013b} and \cite{PSK18} include the dilation operator,  generalized shift operators as well as the Sturm-Liouville differential operator of second order.
\end{remark}

\subsection{Generalization 1: Change of operators}

The actions $A^{k} f$ needed for the generalized Prony method  to recover $f \in {\mathcal M}(A)$ in (\ref{ma}) may be very expensive to acquire. Therefore we can  try to replace the operator $A$ by a different operator with the same eigenfunctions $v_{\lambda}$ such that the powers of this new operator are simpler to realize.
 We start with the following definition.

\begin{definition}[Iteration Operator]
\label{defevo} Let $A:V\to V$ be a
linear operator, and let $\sigma(A) \neq \emptyset$  be a subset of the  point spectrum $\sigma_P(A)$ with pairwise different eigenvalues and
with corresponding normalized eigenfunctions
$v_{\lambda}$
such that the map $\lambda \mapsto v_{\lambda}$ is injective for $\lambda \in \sigma(A)$.
Further, let $\varphi: \sigma(A) \to \C$
be an injective function. We call $\Phi = \Phi_{\varphi}$ an \textit{iteration operator to $A$} if $\Phi:  {\mathcal M}(A) \to {\mathcal M}(A)$ is a well-defined linear operator and $\Phi \, v_{\lambda} = \varphi(\lambda) \, v_{\lambda}$
for all $\lambda \in \sigma(A)$. 
\end{definition}

The injectivity of $\varphi$ in Definition \ref{defevo} implies that the values $\varphi(\lambda)$ are  pairwise different for all $\lambda \in \sigma(A)$. In particular, we can show that for analytic functions $\varphi$ the operator $\Phi= \varphi(A)$ is an iteration operator.

\begin{theorem}
\label{th_spectral_mapping}
Let $A:V\to V$ be a
linear operator, and let $\sigma(A) \neq \emptyset$  be a subset of the  point spectrum $\sigma_P(A)$ with pairwise different eigenvalues and
with corresponding eigenfunctions
$v_{\lambda}$
such that the map $\lambda \mapsto v_{\lambda}$ is injective for $\lambda \in \sigma(A)$. Let $\varphi: \sigma(A) \to {\mathbb C}$ be an analytic,  injective function.
Then $\varphi(A)$ is an iteration operator, i.e., it is a well-defined  linear operator on ${\mathcal M}(A)$ and 
$$(A - I\lambda)\, v_{\lambda}= 0 $$
implies
$$
(\varphi(A) - \varphi(\lambda)I) \, v_{\lambda} = 0.$$
This means, if $v_{\lambda}$ is an eigenfunction of $A$ corresponding to the eigenvalue $\lambda$, then $v_{\lambda}$ is also an eigenfunction of $\varphi(A)$ corresponding to the eigenvalue $\varphi(\lambda)$. 
\end{theorem}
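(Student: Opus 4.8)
The plan is to reduce everything to the power series definition of $\varphi(A)$. Since $\varphi$ is analytic on (a neighborhood of) $\sigma(A)$, I would write $\varphi(z) = \sum_{k=0}^{\infty} a_{k} z^{k}$ with a positive radius of convergence $R$, and take this series as the definition of the operator $\varphi(A) := \sum_{k=0}^{\infty} a_{k} A^{k}$. The first thing to check is that this is well-defined as an operator on ${\mathcal M}(A)$: any $f \in {\mathcal M}(A)$ is a \emph{finite} linear combination $f = \sum_{j=1}^{M} c_{j} v_{\lambda_{j}}$, so it suffices to make sense of $\varphi(A) v_{\lambda}$ for a single eigenfunction. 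Here the computation collapses, exactly as in (\ref{th_shift_exponential}): applying $A^{k}$ to $v_{\lambda}$ gives $\lambda^{k} v_{\lambda}$ (by induction on $k$ using $A v_{\lambda} = \lambda v_{\lambda}$), so
$$ \varphi(A) v_{\lambda} = \sum_{k=0}^{\infty} a_{k} A^{k} v_{\lambda} = \Big( \sum_{k=0}^{\infty} a_{k} \lambda^{k} \Big) v_{\lambda} = \varphi(\lambda) v_{\lambda}, $$
where the scalar series converges because $\lambda$ lies in the domain of analyticity of $\varphi$. Extending by linearity, $\varphi(A) f = \sum_{j=1}^{M} c_{j} \varphi(\lambda_{j}) v_{\lambda_{j}}$, which is again a finite expansion into eigenfunctions $v_{\lambda_{j}}$ of $A$; since $\varphi$ is injective, the $\varphi(\lambda_{j})$ are pairwise distinct and the $c_{j}$ are still nonzero, so $\varphi(A) f \in {\mathcal M}(A)$. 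This shows $\varphi(A): {\mathcal M}(A) \to {\mathcal M}(A)$ is well-defined, and linearity is immediate from the linearity of each $A^{k}$.

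It then remains to verify the spectral-mapping implication, which is now essentially a restatement of what was just computed: if $(A - \lambda I) v_{\lambda} = 0$, i.e. $A v_{\lambda} = \lambda v_{\lambda}$, then by the above $\varphi(A) v_{\lambda} = \varphi(\lambda) v_{\lambda}$, which is exactly $(\varphi(A) - \varphi(\lambda) I) v_{\lambda} = 0$. Thus $v_{\lambda}$ is an eigenfunction of $\varphi(A)$ to the eigenvalue $\varphi(\lambda)$, and since $\lambda \mapsto \varphi(\lambda)$ is injective, the correspondence $\varphi(\lambda) \mapsto v_{\lambda}$ is again injective, so $\varphi(A)$ meets all requirements of Definition \ref{defevo} and is an iteration operator to $A$.

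The one genuinely delicate point — the place where care is needed rather than routine algebra — is the interchange of the linear operator with the infinite sum, i.e. justifying $\varphi(A) v_{\lambda} = \sum_{k} a_{k}(A^{k} v_{\lambda})$ and the convergence of the operator series on ${\mathcal M}(A)$. On a general normed (not necessarily complete) space $V$ this needs a word: but because we only ever apply $\varphi(A)$ to elements of ${\mathcal M}(A)$, every application reduces to finitely many scalar power series $\sum_{k} a_{k} \lambda_{j}^{k} = \varphi(\lambda_{j})$, each convergent since $\lambda_{j} \in \sigma(A)$ lies in the region of analyticity. So the interchange is legitimate on the relevant subspace, and no completeness of $V$ is required. (If one wanted $\varphi(A)$ defined on a larger domain one would invoke, e.g., boundedness of $A$ with $\|A\| < R$ or a holomorphic functional calculus, but that is not needed here.)
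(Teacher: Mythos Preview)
Your proposal is correct and follows essentially the same route as the paper: expand $\varphi$ as a power series, apply it termwise to an eigenfunction using $A^{k}v_{\lambda}=\lambda^{k}v_{\lambda}$, and conclude $\varphi(A)v_{\lambda}=\varphi(\lambda)v_{\lambda}$, then invoke injectivity of $\varphi$. You are in fact more explicit than the paper about why the operator series makes sense on ${\mathcal M}(A)$ (reduction to finitely many convergent scalar series) and about $\varphi(A)$ mapping ${\mathcal M}(A)$ into itself; one small slip is that the new coefficients are $c_{j}\varphi(\lambda_{j})$, not $c_{j}$, so ``the $c_{j}$ are still nonzero'' is not quite the right justification---but since a vanishing term simply drops out, $\varphi(A)f$ is still a finite eigenfunction expansion and the conclusion stands.
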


\begin{proof}
Since $\varphi$ is assumed to be analytic on $\sigma(A)$, it follows that its power series $\varphi(z) = \sum_{n=0}^{\infty} a_{n} \, z^{n}$ converges for $z \in \sigma(A)$. Thus, $A \, v_{\lambda} = \lambda \, v_{\lambda} $ implies for all $\lambda \in \sigma(A)$
$$ \varphi(A) \, v_{\lambda} = \sum_{n=0}^{\infty} a_{n} \, A^{n} v_{\lambda} = \lim_{N\to \infty} \sum_{n=0}^{N}  a_{n} \lambda^{n} v_{\lambda} = \varphi(\lambda) \, v_{\lambda}.$$
Further, the injectivity of $\varphi$ implies that the eigenvalues $\varphi(\lambda)$, $\lambda \in \sigma(A)$, are pairwise distinct.
Thus, $\varphi(A)$ is well-defined on ${\mathcal M}(A)$ and satisfies all assumptions of an iteration operator.
\end{proof}

\begin{example}\label{exs}
1. One example has been  already seen in Section \ref{sec_GPM}. We can take $V=C^{\infty}({\mathbb R})$, $A= \frac{\mathrm d}{{\mathrm d}x}$ with $\sigma_{P}(A) = {\mathbb C}$ according to Theorem \ref{th_spectral_mappingex}. Further, let $\sigma(A) = {\mathbb R} + {\mathrm i} \, [-C \pi, \, C \pi) \subset \sigma_{P}(A)$.
Then, $\varphi(z):=\exp(\tau \, z)$ with $0 < \tau \le 1/C$ is injective on $\sigma(A)$,
and we obtain the iteration operator $\varphi(A) = S_{\tau}$ on ${\mathcal M}(A)$.
\smallskip

2. We take $\varphi(z) = z^{-1}$ and $\sigma(A) \in \sigma_{P}(A) \setminus \{ 0 \}$. Then $\varphi(A) = A^{-1}$ is well-defined on ${\mathcal M}(A)$  and
$$A \, v_{\lambda}=\lambda \, v_{\lambda} \qquad  \Leftrightarrow \qquad 
A^{-1}\, v_{\lambda}=\frac{1}{\lambda} \,  v_{\lambda}.$$
For example,  $A=S_{\tau}$ with $\tau \neq 0$ yields $A^{-1}=S_{-\tau}$. The dilation operator $D_{a}:C({\mathbb R}) \to C({\mathbb R})$ 
with $D_{a}f (x) := f(a x)$, $a \neq 0$ and $|a| \neq 1$, yields $D_{a}^{-1} f (x) = f(\frac{1}{a} x)$.

3. Consider the operator $A $ on $C^{\infty}({\mathbb R})$ given by 
$$ Af (x) :=  x \, \frac{{\mathrm d} f }{{\mathrm d}x}(x) = x \, f'(x)$$
with eigenfunctions $x^{p}$ for $p \in {\mathbb R}$ to the eigenvalues $p \in {\mathbb R}$.
We use  $\varphi(z) = \exp( \tau \, z)$ with $\tau \in {\mathbb R} \setminus \{ 0 \}$ and obtain for each polynomial $x^{m}$ that 
$$ \exp( \tau x \, \frac{\mathrm d}{{\mathrm d}x}) \, x^{m} = \sum_{\ell=0}^{\infty} \frac{\tau^{\ell}}{\ell !} \, \left( x \, \frac{\mathrm d}{{\mathrm d}x} \right)^{\ell} \, x^{m} = \sum_{\ell=0}^{\infty} \frac{\tau^{\ell}}{\ell !}  \, m^{\ell} \, x^{m} ={\mathrm e}^{\tau m} \, x^{m}  = ({\mathrm e}^{\tau} x)^{m},
$$
see also \cite{Dattoli.2000}.
Thus, $\varphi(A)$ is here the dilation operator $D_{\exp(\tau)}$. The injectivity condition for $\varphi(z)$ is satisfied  since $\exp(\tau p)$ is strictly monotone as a function in $p$. $\Box$
\end{example}
\medskip

What does a change from $A$ to $\varphi(A)$ mean for the reconstruction scheme to recover an expansion $f$ in (\ref{genexpo})?
Using the operator $A$ and a functional $F$, Theorem \ref{GPM} implies that we need (at least) the sample values $F(A^{k} f)$, $k=0, \ldots , 2M-1$ for the recovery of $f$. Changing from $A$ to $\varphi(A)$, we observe that all assumptions required in Theorem \ref{GPM} also hold for $\varphi(A)$, and we can now reconstruct $f$ in (\ref{genexpo}) from samples $F(\varphi(A)^{k} f)$, $k=0, \ldots , 2M-1$, thereby employing the new Prony polynomial 
$$P_{\varphi}(z):=\prod_{j=1}^{M}\left(z-\varphi(\lambda_{j}
)  \right):=\sum\limits_{\ell=0}^M p_{\ell}\, z^{\ell}. $$
Taking a suitable $\varphi$ may have two advantages. First, the samples $F(\varphi(A)^{k} f)$, $k=0, \ldots , 2M-1$, may be much simpler to acquire. In Section \ref{sec:cos} and Section \ref{sec_GEProM}, we will present many examples, where a change from linear differential operators $A$ to generalized shift operators $\varphi(A)$ leads to new recovery schemes for the expansions in (\ref{genexpo}) employing  just function values of $f$ instead of high order derivative values.

Second, the numerical scheme to recover $f$ can be essentially stabilized. The main reason for that is the change of eigenvalues from $\lambda \in \Lambda_{f}$ to $\varphi(\lambda) \in \varphi(\Lambda_{f})$. The eigenvalues play an important role  for the matrices being involved in the Prony algorithms. Compared with the generalized Prony method, we get now  instead of (\ref{2stern}) the Hankel matrix  factorization
$$ \left( F ( \varphi(A)^{\ell + k} \, f) \right)_{k=0,\ell=0}^{M-1,M} = V_{\varphi(\Lambda_{f}),M,M} \, \text{diag} \, (c_{j} \, F(v_{\lambda_{j}}))_{j=1}^{M} \, V_{\varphi(\Lambda_{f}),M+1,M}^{T} $$
 with the Vandermonde matrices 
$$ V_{\varphi(\Lambda_{f}),M,M} := (\varphi(\lambda_{j})^{k})_{k=0,j=1}^{M-1,M}, \qquad V_{\varphi(\Lambda_{f}),M+1,M} := (\varphi(\lambda_{j})^{k})_{k=0,j=1}^{M,M} $$
to recover the coefficient vector ${\mathbf p} =(p_{0}, \ldots , p_{M})^{T}$ of the Prony polynomial $P_{\varphi}$.

\subsection{Generalization 2: Change  the sampling scheme}
\label{sec3.3}

As we have seen  in Theorem \ref{GPM} and Theorem \ref{th_spectral_mapping},  the expansion $f= \sum_{j=1}^{M} c_{j} \,  v_{\lambda_{j}}$ into eigenfunctions of the operator $A$ can be recovered using either the samples $F(A^{k} f)$ or 
the samples  $F(\varphi(A)^{k} f)$ for  $k=0, \ldots , 2M-1$, where $F:V \to {\mathbb C}$ is a linear functional satisfying $ F(v_{\lambda}) \neq 0 $ for all $\lambda \in \sigma(A)$.
Having a closer look at the equations (\ref{3.0}) and (\ref{3.1}) we observe however that  already $P(A) f=0$, such that $F \, A^{k}$ can be replaced by different functionals.

\begin{definition}[Sampling  Functionals]
\label{def_evaluation_scheme}
\textit{Let $A:V \to V$ be a linear operator and let $\sigma(A)$ be a fixed  subset of pairwise different eigenvalues  of $A$.
Further, let 
$$V_{\sigma(A)}:= \{ v_{\lambda} : \, A \, v_{\lambda} = \lambda \, v_{\lambda}, \, \lambda \in \sigma(A), \, \| v_{\lambda} \|_{V} = 1 \}$$
be the corresponding set of eigenfunctions such that the mapping $\lambda \to v_{\lambda}$ is injective on $\sigma(A)$. 
Then $\{ F_{k} \}_{k=0}^{M-1}$ with 
$$F_k: \, V\to\C, \quad \quad k =0, \ldots , M-1, $$
forms an admissible set of sampling functionals for $A$ 
if  for all finite subsets $\Lambda_{M} \subset \sigma(A)$ with cardinality $M < \infty$ the matrix
$$ \left(F_{k} (v_{\lambda}) \right)_{k=0,\lambda \in \Lambda_{M}}^{M-1}$$
has full rank $M$.}
\end{definition}

If the set of functionals $\{ F_{k} \}_{k=0}^{M-1}$ is admissible for a linear operator $A$, then it is also admissible for any iteration operator $\varphi(A)$, since the eigenvectors  $v_{\lambda}$ do not change.
Then we obtain

\begin{theorem}\label{theosamp}
Assume that $\{F_{k} \}_{k=0}^{M-1}$  forms an admissible set of sampling functionals  for the linear operator $A: V \to V$ according to Definition $\ref{def_evaluation_scheme}$.
Let $f \in {\mathcal M}(A)$ be a linear expansion into eigenfunctions of $A$ as in $(\ref{genexpo})$. 
Then the sampling matrix
$$ \left(F_{k} (A^{\ell} f) \right)_{k=0,\ell=0}^{M-1,M} \in {\mathbb C}^{M \times (M+1)}$$
possesses rank $M$ and is called admissible sampling matrix for $f$.
Further, if $\Phi=\varphi(A)$ is an iteration operator of $A$ as given in Theorem \ref{th_spectral_mapping}, then also
$$ \left(F_{k} (\Phi^{\ell} f) \right)_{k=0,\ell=0}^{M-1,M} \in {\mathbb C}^{M \times (M+1)}$$
possesses rank $M$ and is therefore an admissible sampling matrix.
\end{theorem}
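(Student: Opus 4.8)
The plan is to exploit the Vandermonde-type factorization of the sampling matrix exactly as in the proof of Theorem \ref{GPM}, but with the point evaluation functional $F(A^{k}\cdot)$ replaced by the abstract functionals $F_{k}$. First I would write $f = \sum_{j=1}^{M} c_{j} v_{\lambda_{j}}$ with $\Lambda_{f} = \{\lambda_{1}, \ldots, \lambda_{M}\} \subset \sigma(A)$ and use the linearity of $F_{k}$ together with $A^{\ell} v_{\lambda_{j}} = \lambda_{j}^{\ell} v_{\lambda_{j}}$ to obtain
$$ \left(F_{k}(A^{\ell} f)\right)_{k=0,\ell=0}^{M-1,M} = \left(\sum_{j=1}^{M} c_{j}\, \lambda_{j}^{\ell}\, F_{k}(v_{\lambda_{j}})\right)_{k=0,\ell=0}^{M-1,M}. $$
This factors as
$$ \left(F_{k}(A^{\ell} f)\right)_{k=0,\ell=0}^{M-1,M} = \left(F_{k}(v_{\lambda_{j}})\right)_{k=0,j=1}^{M-1,M}\, \operatorname{diag}(c_{j})_{j=1}^{M}\, V_{\Lambda_{f},M+1,M}^{T}, $$
where $V_{\Lambda_{f},M+1,M} = (\lambda_{j}^{\ell})_{\ell=0,j=1}^{M,M}$ is the Vandermonde matrix already used above.

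Next I would argue that each of the three factors on the right has rank $M$. The matrix $\left(F_{k}(v_{\lambda_{j}})\right)_{k=0,j=1}^{M-1,M}$ has full rank $M$ precisely because $\{F_{k}\}_{k=0}^{M-1}$ is an admissible set of sampling functionals for $A$ and $\Lambda_{f}$ is a finite subset of $\sigma(A)$ of cardinality $M$ (Definition \ref{def_evaluation_scheme}). The diagonal matrix $\operatorname{diag}(c_{j})$ is invertible since $c_{j} \in {\mathbb C} \setminus \{0\}$ by the definition of ${\mathcal M}(A)$. The Vandermonde matrix $V_{\Lambda_{f},M+1,M}^{T} \in {\mathbb C}^{M \times (M+1)}$ has rank $M$ because the $\lambda_{j}$ are pairwise distinct. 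Since the product of an $M \times M$ matrix of rank $M$, an invertible $M \times M$ matrix, and an $M \times (M+1)$ matrix of rank $M$ again has rank $M$ (rank cannot drop when the left factors are invertible), the sampling matrix $\left(F_{k}(A^{\ell} f)\right)_{k=0,\ell=0}^{M-1,M}$ has rank $M$, proving the first claim.

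For the second claim I would observe that if $\Phi = \varphi(A)$ is an iteration operator as in Theorem \ref{th_spectral_mapping}, then $\Phi$ has exactly the same eigenfunctions $v_{\lambda}$, with eigenvalues $\varphi(\lambda)$, and $\varphi$ restricted to $\Lambda_{f}$ is injective so the values $\varphi(\lambda_{1}), \ldots, \varphi(\lambda_{M})$ are pairwise distinct. Repeating the computation with $\Phi^{\ell} v_{\lambda_{j}} = \varphi(\lambda_{j})^{\ell} v_{\lambda_{j}}$ gives the analogous factorization
$$ \left(F_{k}(\Phi^{\ell} f)\right)_{k=0,\ell=0}^{M-1,M} = \left(F_{k}(v_{\lambda_{j}})\right)_{k=0,j=1}^{M-1,M}\, \operatorname{diag}(c_{j})_{j=1}^{M}\, V_{\varphi(\Lambda_{f}),M+1,M}^{T}, $$
with $V_{\varphi(\Lambda_{f}),M+1,M} = (\varphi(\lambda_{j})^{\ell})_{\ell=0,j=1}^{M,M}$. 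The first factor is unchanged and still has rank $M$ by admissibility (which, as remarked before the theorem, carries over to $\varphi(A)$ since the eigenvectors are the same), $\operatorname{diag}(c_{j})$ is invertible, and the new Vandermonde matrix has rank $M$ because the $\varphi(\lambda_{j})$ are pairwise distinct; hence this sampling matrix also has rank $M$.

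I do not expect a serious obstacle here: the argument is essentially a bookkeeping exercise once the Vandermonde factorization is written down. The only point that requires mild care is making sure the eigenfunctions appearing in the expansion of $f$ really are the normalized eigenfunctions indexed by $\sigma(A)$, so that Definition \ref{def_evaluation_scheme} applies verbatim to the set $\{v_{\lambda_{j}}\}_{j=1}^{M}$; this is guaranteed by the standing assumption that $\lambda \mapsto v_{\lambda}$ is injective and that all $v_{\lambda}$ in $V_{\sigma(A)}$ are normalized. A secondary subtlety is that one must invoke, for the $\Phi$-part, the observation stated just before Theorem \ref{theosamp} that admissibility of $\{F_{k}\}$ for $A$ implies admissibility for $\varphi(A)$ — but this is immediate since the relevant matrix $\left(F_{k}(v_{\lambda})\right)$ depends only on the eigenfunctions, which are common to $A$ and $\varphi(A)$.
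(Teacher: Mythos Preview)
Your proof is correct and follows essentially the same route as the paper: both arguments rest on the factorization
\[
\bigl(F_{k}(\Phi^{\ell}f)\bigr)_{k,\ell} = \bigl(F_{k}(v_{\lambda_{j}})\bigr)_{k,j}\,\operatorname{diag}(c_{j})\,\bigl(\varphi(\lambda_{j})^{\ell}\bigr)_{j,\ell}
\]
and then verify that each factor has rank $M$. The only organizational difference is that the paper handles the $\varphi(A)$ case directly and obtains the $A$ case as the specialization $\varphi(z)=z$, whereas you treat the two cases separately; this is a matter of presentation rather than substance.
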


\begin{proof} We show the second equation for $\Phi=\varphi(A)$, where $\varphi$ is an injective analytic function on $\sigma(A)$. Then the first equation follows by taking $\varphi(z) = z$. We find
\begin{eqnarray*}
\Big( F_{k}(\varphi(A)^{\ell} f) \Big)_{k=0,\ell=0}^{M-1,M}\!\!\!\! &=& \!\!\!\!\! \Big(\! F_{k}( \varphi(A)^{\ell} \sum_{j=1}^{M} c_{j} \, v_{\lambda_{j}} ) \Big)_{k=0,\ell=0}^{M-1,M}
\!\!\!\!= \!\Big( \! \sum_{j=1}^{M} c_{j} \, \varphi(\lambda_{j})^{\ell} \,  F_{k} (v_{\lambda_{j}})  \Big)_{k=0,\ell=0}^{M-1,M} \\
&=& \Big( F_{k} (v_{\lambda_{j}}) \Big)_{k=0, j=1}^{M-1,M} \, 
\text{diag} \, (c_{j})_{j=1}^{M}\, \Big( \varphi(\lambda_{j})^{\ell} \Big)_{j=1, \ell=0}^{M,M}.
\end{eqnarray*}
All three matrices in this factorization  have full rank $M$ by assumption, and the assertion follows. In particular, the last matrix is a Vandermonde matrix generated by $M$ pairwise distinct values $\varphi(\lambda_{j})$,  $j=1, \ldots , M$.
\end{proof}

\begin{example}
Comparison  with formula (\ref{3.1}) yields that  $F_{k}= F A^{k}$, $k=0, \ldots , M-1$, is always an admissible set of sampling functionals,  since the proof of Theorem \ref{GPM} shows that $(F(A^{k+\ell} f))_{k=0,\ell=0}^{M-1, M}$ has full rank $M$  for each $f$ in ${\mathcal M}(A)$. $\Box$
\end{example}

Further we have

\begin{lemma}\label{lem:1}
Let $A:V\to V$ be a
linear operator, and let $\sigma(A) \neq \emptyset$  be a subset of the  point spectrum $\sigma_P(A)$ with pairwise different eigenvalues and
with corresponding eigenfunctions
$v_{\lambda}$
such that the map $\lambda \mapsto v_{\lambda}$ is injective for $\lambda \in \sigma(A)$.
Let $\psi$ be an analytic injective function  on $\sigma(A)$. Assume that  $F:V \to {\mathbb C}$ is a linear functional with $F v_{\lambda} \neq 0$ for all $\lambda \in \sigma(A)$.
Then $\{F_{k} \}_{k=0}^{M-1} := \{ F (\psi(A)^{k})\}_{k=0}^{M-1}$ is an admissible set of sampling functionals and the matrix 
$$ ( F_{k}(A^{\ell} f))_{k=0,\ell=0}^{M-1,M} = (F(\psi(A)^{k}\,  A^{\ell} f))_{k=0,\ell=0}^{M-1, M} \in {\mathbb C}^{M \times M+1} $$
is an admissible sampling matrix for each $f \in {\mathcal M}(A)$.
\end{lemma}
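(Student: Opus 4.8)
The plan is to reduce the claim to Theorem~\ref{theosamp} by showing that $\{F_k\}_{k=0}^{M-1} = \{F(\psi(A)^k)\}_{k=0}^{M-1}$ forms an admissible set of sampling functionals in the sense of Definition~\ref{def_evaluation_scheme}; once this is done, the admissibility of the sampling matrix $(F_k(A^\ell f))_{k=0,\ell=0}^{M-1,M}$ is immediate, and the identity $F_k(A^\ell f) = F(\psi(A)^k A^\ell f)$ is just the definition of $F_k$. So first I would invoke Theorem~\ref{th_spectral_mapping} to observe that, since $\psi$ is analytic and injective on $\sigma(A)$, the operator $\psi(A)$ is an iteration operator on $\mathcal{M}(A)$ with $\psi(A)v_\lambda = \psi(\lambda) v_\lambda$ for all $\lambda \in \sigma(A)$, and the values $\psi(\lambda)$ are pairwise distinct.

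Next I would verify the rank condition. Fix an arbitrary finite subset $\Lambda_M \subset \sigma(A)$ with $|\Lambda_M| = M$, enumerate its elements as $\lambda_1,\dots,\lambda_M$, and compute
\begin{align*}
\bigl(F_k(v_{\lambda_j})\bigr)_{k=0,j=1}^{M-1,M}
&= \bigl(F(\psi(A)^k v_{\lambda_j})\bigr)_{k=0,j=1}^{M-1,M}
= \bigl(\psi(\lambda_j)^k \, F(v_{\lambda_j})\bigr)_{k=0,j=1}^{M-1,M} \\
&= \bigl(\psi(\lambda_j)^k\bigr)_{k=0,j=1}^{M-1,M} \, \text{diag}\,(F(v_{\lambda_j}))_{j=1}^{M}.
\end{align*}
The first factor is an $M \times M$ Vandermonde matrix generated by the $M$ pairwise distinct nodes $\psi(\lambda_j)$, hence invertible; the diagonal factor is invertible because $F(v_\lambda) \neq 0$ for all $\lambda \in \sigma(A)$ by hypothesis. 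Therefore the product has rank $M$, which is exactly the admissibility requirement of Definition~\ref{def_evaluation_scheme}.

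Finally, with $\{F_k\}_{k=0}^{M-1}$ now known to be an admissible set of sampling functionals for $A$, Theorem~\ref{theosamp} (applied with $\Phi$ equal to the identity, i.e.\ the case $\varphi(z)=z$) yields that $(F_k(A^\ell f))_{k=0,\ell=0}^{M-1,M} \in \mathbb{C}^{M \times (M+1)}$ has rank $M$ for every $f \in \mathcal{M}(A)$; substituting $F_k = F(\psi(A)^k)$ gives the stated matrix $(F(\psi(A)^k A^\ell f))_{k=0,\ell=0}^{M-1,M}$. I do not anticipate a genuine obstacle here: the only subtlety is the bookkeeping that $\psi(A)^k$ and $A^\ell$ commute (both are functions of $A$ on $\mathcal{M}(A)$) and that $\psi(A)^k v_{\lambda_j} = \psi(\lambda_j)^k v_{\lambda_j}$, which follows by iterating Theorem~\ref{th_spectral_mapping}; everything else is the Vandermonde-times-diagonal factorization already used twice in the paper.
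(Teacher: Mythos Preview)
Your proof is correct and follows essentially the same idea as the paper: both arguments hinge on the factorization $F_k(v_{\lambda_j}) = \psi(\lambda_j)^k F(v_{\lambda_j})$ and the resulting Vandermonde-times-diagonal decomposition. The only difference is organizational: you verify the admissibility condition of Definition~\ref{def_evaluation_scheme} and then invoke Theorem~\ref{theosamp} as a black box, whereas the paper instead expands $f = \sum_j c_j v_{\lambda_j}$ directly inside $(F(\psi(A)^k A^\ell f))_{k,\ell}$ and factors the full sampling matrix as $\mathbf{V}_{\psi(\Lambda_f),M,M}\,\mathrm{diag}(c_j F(v_{\lambda_j}))\,\mathbf{V}_{\Lambda_f,M+1,M}^T$ in one step. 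Your route is slightly more modular; the paper's is slightly more self-contained; neither buys anything the other lacks.
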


\begin{proof}
From  $\psi(A)^{k} \, v_{\lambda} = \psi(\lambda)^{k} \, v_{\lambda}$ it follows that 
$$F_{k}  (v_{\lambda}) = F (\psi(A)^{k} \, v_{\lambda}) = \psi(\lambda)^{k} \, F(v_{\lambda})$$ 
is bounded  and nonzero by assumption.
Further, for $f \in {\mathcal M}(A)$,
\begin{eqnarray*}
\Big(F(\psi(A)^{k}\,  A^{\ell} f) \Big)_{k=0,\ell=0}^{M-1, M} &=& 
\Big(F \Big( \psi(A)^{k}\,  A^{\ell} \, \sum_{j=1}^{M} 
c_{j} \, v_{\lambda_{j}} \Big) \Big)_{k=0,\ell=0}^{M-1, M} \\
&=& \Big( F \Big(  \sum_{j=1}^{M} c_{j} \, \psi(\lambda_{j})^{k} \,  
 \lambda_{j}^{\ell} \, v_{\lambda_{j}} \Big) \Big)_{k=0,\ell=0}^{M-1, M} \\
&=& {\mathbf V}_{\psi(\Lambda_{f}), M,M} \, \textrm{diag} \, ((c_{j} \, 
F(v_{\lambda_{j}})))_{j=1}^{M} \, {\mathbf V}_{\Lambda_{f},M+1,M}^{T}
\end{eqnarray*}
with , with 
$\Lambda_{f} = \{\lambda_{1}, \ldots , \lambda_{M}\}$, ${\mathbf V}_{\psi(\Lambda_{f}), M,M} := ((\psi(\lambda_{j}))^{k})_{k=0,j=1}^{M-1,M}$ and ${\mathbf V}_{\Lambda_{f},M+1,M} := (\lambda_{j}^{\ell})_{\ell=0,j=1}^{M,M}$.
These two  Vandermonde matrices have full rank $M$ since the $\lambda_{j} \in \Lambda_{f}$ are pairwise different and $\psi$ is injective on $\Lambda_{f}$ with $\psi(\lambda_{j}) \neq 0$ for $\lambda_{j} \in \Lambda_{f}$. 
\end{proof}
\medskip

\subsection{Generalized operator based Prony method (GOP)}
\label{sec3.neu}

The following theorem summarizes the central statement of the generalized operator-based Prony method (GOP) 
and the corresponding proof  results  in an algorithm to solve the reconstruction problem for $f\in\mathcal{M}(A)$ in (\ref{ma}).

\begin{theorem}[Generalized Operator based Prony Method] \label{GOProM} \hspace{1cm} \\
Let  $A: V \to V$ be a linear operator on the normed vector space $V$ over ${\mathbb C}$, and let $\sigma(A)$ be a subset of pairwise different eigenvalues of $A$. Let $\Phi=\varphi(A)$ be  an iteration operator of $A$ as given in Definition $\ref{defevo}$.
Assume that  the set $\{ F_{k} \}_{k=0}^{M-1}$ is an admissible set of sampling functionals according to Definition $\ref{def_evaluation_scheme}$.
 Then each $f \in {\mathcal M}(A)$ can be completely recovered from the complex samples $F_{k}(\varphi(A)^{\ell} f)$, $k=0, \ldots , M-1$, $\ell=0, \ldots , M$.
\end{theorem}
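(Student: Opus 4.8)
The plan is to assemble the statement directly from the three ingredients already established: Theorem~\ref{th_spectral_mapping} (which guarantees that $\varphi(A)$ is an iteration operator with the same eigenfunctions as $A$), Theorem~\ref{theosamp} (which gives the rank of the sampling matrix), and the Prony-polynomial argument from the proof of Theorem~\ref{GPM}, now carried out with the functionals $F_k$ in place of $F A^k$. Concretely, for $f=\sum_{j=1}^M c_j v_{\lambda_j}\in{\mathcal M}(A)$ I would introduce the Prony polynomial $P_\varphi(z):=\prod_{j=1}^M(z-\varphi(\lambda_j))=\sum_{\ell=0}^M p_\ell z^\ell$ with $p_M=1$, associated to the (unknown) active eigenvalues. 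Since the $\varphi(\lambda_j)$ are pairwise distinct (injectivity of $\varphi$), the vector ${\mathbf p}=(p_0,\dots,p_M)^T$ is the unique such normalized coefficient vector.

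First I would record the key annihilation identity: because $v_{\lambda_j}$ is an eigenfunction of $\varphi(A)$ to the eigenvalue $\varphi(\lambda_j)$ (Theorem~\ref{th_spectral_mapping}), one has $P_\varphi(\varphi(A))f=\sum_{j=1}^M c_j P_\varphi(\varphi(\lambda_j))v_{\lambda_j}=0$. Applying any linear functional $F_k$ then yields the homogeneous linear system
$$ 0 = F_k\bigl(P_\varphi(\varphi(A))f\bigr) = \sum_{\ell=0}^{M} p_\ell\, F_k\bigl(\varphi(A)^{\ell} f\bigr), \qquad k=0,\dots,M-1, $$
i.e.\ $\bigl(F_k(\varphi(A)^\ell f)\bigr)_{k=0,\ell=0}^{M-1,M}\,{\mathbf p}={\mathbf 0}$. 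By Theorem~\ref{theosamp} this $M\times(M+1)$ sampling matrix has rank $M$, so its kernel is one-dimensional, and the normalization $p_M=1$ pins down ${\mathbf p}$ uniquely. Hence the $M$ numbers $\varphi(\lambda_1),\dots,\varphi(\lambda_M)$ are recovered as the zeros of $P_\varphi$, and $\lambda_j=\varphi^{-1}(\varphi(\lambda_j))$ follows from injectivity of $\varphi$ on $\sigma(A)$; the eigenfunctions $v_{\lambda_j}$ are then determined since $\lambda\mapsto v_\lambda$ is injective.

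It remains to recover the coefficients $c_j$. For that I would note that the admissibility of $\{F_k\}$ means $\bigl(F_k(v_{\lambda_j})\bigr)_{k=0,j=1}^{M-1,M}$ has full rank $M$, so the linear system
$$ \sum_{j=1}^{M} c_j\, F_k(v_{\lambda_j}) = F_k(f), \qquad k=0,\dots,M-1, $$
(whose left side is obtained by applying each $F_k$ to $f=\sum_j c_j v_{\lambda_j}$) determines ${\mathbf c}=(c_1,\dots,c_M)^T$ uniquely; alternatively, since $F_k(\varphi(A)^\ell v_{\lambda_j})=\varphi(\lambda_j)^\ell F_k(v_{\lambda_j})$, one may use the full overdetermined system $\sum_{j=1}^M c_j\,\varphi(\lambda_j)^\ell F_k(v_{\lambda_j})=F_k(\varphi(A)^\ell f)$ for $k=0,\dots,M-1$, $\ell=0,\dots,M$, and solve it in the least-squares sense. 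In either case only the samples $F_k(\varphi(A)^\ell f)$ with $k=0,\dots,M-1$ and $\ell=0,\dots,M$ are used. Since $f$ is already in ${\mathcal M}(A)$, no separate existence argument is needed. I do not expect a genuine obstacle here; the only point requiring care is that the sparsity level $M$ is a priori unknown, so strictly one works with an upper bound $\widetilde M\ge M$ and reads off the true $M$ from the rank of the sampling matrix $\bigl(F_k(\varphi(A)^\ell f)\bigr)_{k,\ell=0}^{\widetilde M-1,\widetilde M}$ — exactly as in the classical case — but this is routine and can be handled as in \cite{Peter.2013b}.
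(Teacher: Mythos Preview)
Your proposal is correct and follows essentially the same route as the paper: define $P_\varphi(z)=\prod_{j=1}^M(z-\varphi(\lambda_j))$, use the iteration-operator property to get $P_\varphi(\varphi(A))f=0$, apply the $F_k$ to obtain a homogeneous system whose coefficient matrix has full rank $M$ by Theorem~\ref{theosamp}, extract the $\varphi(\lambda_j)$ as zeros, and then solve a linear system for the $c_j$. The only minor deviation is that you invoke Theorem~\ref{th_spectral_mapping} (analytic $\varphi$) for the eigenfunction property, whereas the hypothesis already grants this directly via Definition~\ref{defevo}; and your recovery of the $c_j$ from the square system $\sum_j c_j F_k(v_{\lambda_j})=F_k(f)$ is a clean alternative to the paper's use of the overdetermined system~(\ref{sys1}).
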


\begin{proof}
To recover $f = \sum_{j=1}^{M} c_{j} \, v_{\lambda_{j}} \in {\mathcal M}(A)$, we only have to determine the set $\Lambda_{f}= \{ \lambda_{1}, \ldots , \lambda_{M} \}$ of  ``active eigenvalues'' and the corresponding coefficients $c_{j} \in {\mathbb C} \setminus  \{ 0 \}$, $j=1, \ldots , M$, since the map $\lambda \to v_{\lambda}$ is assumed to be injective. 
Further, since $\varphi$ is also injective on $\sigma(A)$, we can determine the set $\varphi(\Lambda_{f}) = \{\varphi(\lambda_{j}): \, j=1, \ldots , M \}$ instead of $\Lambda_{f}$ by Theorem \ref{th_spectral_mapping}. 

Let now 
$$P_{\varphi}(z):= \prod_{j=1}^{M} (z - \varphi(\lambda_{j})) = \sum_{\ell=0}^{M} p_{\ell} \, z^{\ell} $$
be the Prony polynomial determined be the unknown pairwise different active eigenvalues $\varphi(\lambda_{j})$ of $\varphi(A)$ for $\lambda_{j} \in \Lambda_{f}$,
where ${\mathbf p} = (p_{0}, \ldots , p_{M-1}, p_{M})^{T}$ with $p_{M} = 1$ denotes the vector  of coefficients in the monomial representation  of $P_{\varphi}(z)$.  Then
\begin{eqnarray*}
P_{\varphi}(\varphi(A)) f &=&  \prod_{k=1}^{M} ( \varphi(A) - \varphi(\lambda_{k}) I) \, f \\
&=& \sum_{j=1}^{M} c_{j} \, \prod_{k=1}^{M} (\varphi(A) - \lambda_{k} I) v_{\lambda_{j}} =0, 
\end{eqnarray*}
and therefore
$$ F_{k} (P_{\varphi}(\varphi(A)) f) = F_{k} \Big( \sum_{\ell =0}^{M} p_{\ell} \, \varphi(A)^{\ell} f \Big) = \sum_{\ell =0}^{M} p_{\ell} F_{k}(\varphi(A)^{\ell} \, f) = 0, \; k=0, \ldots , M-1.$$
Thus, we obtain a homogeneous linear system to compute ${\mathbf p}$, where  by Theorem \ref{theosamp} (with $A$ replaced by $\varphi(A)$) the coefficient matrix  is the admissible sampling matrix 
$( F_{k}(\varphi(A)^{\ell} f))_{k=0,\ell=0}^{M-1,M} \in {\mathbb C}^{M \times M+1}$ with full rank $M$.
Hence, ${\mathbf p}$ is uniquely determined by this system using the normalization $p_{M}=1$. We can now extract the zeros $\varphi(\lambda_{j})$, $j=1, \ldots, M$, and thus $\Lambda_{f}= \{\lambda_{1}, \ldots, \lambda_{M}\}$.
Finally, we compute the coefficients $c_{\lambda}$ as solutions of the linear system 
\begin{equation}
\label{sys1} F_{k}(\varphi(A)^{\ell} f) = \sum_{j=1}^{M} c_{j} \varphi(\lambda_{j})^{\ell} F_{k} (v_{\lambda_{j}}), \quad \ell=0, \ldots , M,
\end{equation}
where the coefficient matrix is of full rank, since $F_{k} (v_{\lambda_{j}}) \neq 0$ and the arising Vandermonde matrix $((\varphi(\lambda_{j}))^{\ell})_{\ell=0, j=1}^{M,M}$ has full rank $M$ since the values $\varphi(\lambda_{j})$, $j=1, \ldots , M$, are pairwise different.
\end{proof}
\medskip

The proof of Theorem \ref{GOProM} is constructive and leads to the following algorithm for the 
recovery of $f \in {\mathcal M}(A)$.
We assume here that we have an iteration operator $\varphi(A)$ and a given set of admissible sampling functionals $F_{k}$ such that the sampling matrix  $( F_{k}(\varphi(A)^{\ell} f))_{k=0,\ell=0}^{M-1,M} \in {\mathbb C}^{M \times M+1}$ for the operator $\varphi(A)$ has full rank $M$.

\begin{algorithm}[GOP] \hspace{1cm} \\
{\bf Input}: $F_k\left(\varphi(A)^{\ell}f \right)$, $\ell =0, \ldots , M$, $k=0, \ldots , M-1$, where $f \in {\mathcal M}(A)$.
\begin{itemize}
	\item  Compute the kernel vector ${\mathbf p}=(p_{0}, \ldots , p_{M-1}, p_{M})^{T}$ with $p_{M}=1$ of the   matrix  
	$( F_{k}(\varphi(A)^{\ell} f))_{k=0,\ell=0}^{M-1,M} \in {\mathbb C}^{M \times M+1}$.
	\item Compute the $M$ zeros $\varphi(\lambda_j)$, $j=1, \ldots , M$, of the Prony polynomial $P_{\varphi}(z) = \sum_{\ell=0}^{M}
	p_{\ell} z^{\ell}$ and identify the active eigenfunctions $v_{\lambda_{j}}$ by $\varphi(A) \, v_{\lambda_{j}} = \varphi(\lambda_{j}) \, v_{\lambda_{j}}$. 
	Compute $\lambda_{j}$ from $\varphi(\lambda_{j})$ to obtain $\Lambda_{f} = \{ \lambda_{1}, \ldots , \lambda_{M} \}$. 
	\item Compute $c_{j}$ by solving the system in (\ref{sys1}).
\end{itemize}	
{\bf Output}: Parameters  $\lambda_{j}$ and  $c_{j}$, $j=1, \ldots , M$ such that $f = \sum\limits_{j=1}^{M} c_{j} \, v_{\lambda_{j}}$.
\end{algorithm}

\begin{remark}
1. The generalized Prony method in \cite{Peter.2013b} is a special case of GOP if we take $\varphi(z) = z$ and $F_{k} = F \, A^{k}$ for some suitable functional $F$. In this case the sampling  matrix has Hankel structure and we need only $2M$ input values.

2. If we choose $F_{k} = F (\psi(A)^{k} \cdot )$ for some analytic function $\psi$ as in Lemma \ref{lem:1}, then the sampling matrix can be taken in the form $ (F (\psi(A)^{k} \, \varphi(A)^{\ell}f ))_{k=0,\ell=0}^{M-1,M} \in {\mathbb C}^{M \times M+1}$, where compared to Lemma \ref{lem:1}, we have replaced the powers of $A$ by powers of $\varphi(A)$. This sampling matrix is also admissible, and the proof can be performed as for Lemma \ref{lem:1}.

3. GOP can be also generalized to operators with eigenvalues of higher geometric multiplicity, similarly as the generalized Prony method, \cite{Peter.2013b}.
This approach leads to a Prony polynomial with  zeros of higher multiplicity.
We also refer to \cite{BY13, Mourrain.2017}.
In this paper we restrict ourselves to the case where the correspondence between $\lambda$ resp. $\varphi(\lambda)$ and $v_{\lambda}$ is bijective. 
\end{remark}

\subsection{Application of GOP to cosine expansions}
\label{sec:cos}
In this section, we want  to explain the ideas of GOP in a simple example. 

Consider the expansion 
\begin{equation}\label{cos1}
f(x) :=\sum\limits_{j=1}^M c_j \, \cos(\alpha_{j} x),
\end{equation} 
where we want to recover the $2 M $ parameters $\alpha_{j} \in [0, \, C) \subset {\mathbb R}$ and $c_{j} \in {\mathbb C} \setminus \{0\}$, $j=1, \ldots , M$.
We observe that  $A:=-\frac{{\mathrm d}^2}{{\mathrm d}x^2}$ is an operator on $C^{\infty}({\mathbb R})$ such that all functions $\cos(\alpha  x)$ are eigenfunctions of $A$ with 
$$A \cos (\alpha \cdot) = \alpha^{2} \cos(\alpha \cdot). $$ 
Using the generalized Prony method in Theorem \ref{GPM}, we can therefore 
reconstruct $f$ in (\ref{cos1})  using the samples $F(A^{k}f) = (-1)^{k} \, F(f^{(2k)}) $,  
$k=0, \ldots, 2M-1$, where $f^{(2k)}$   denotes the $2k$-th derivative of 
$f$. Here, the  sampling functional  $F: C^{\infty}({\mathbb R}) \to {\mathbb C}$ 
needs to satisfy $F(\cos(\alpha \cdot) ) \neq 0$ for all all $\alpha \in [0, C)$.

 Taking e.g.\ the point evaluation functional $F f  = f(0)$, we need the measurements $f^{(2k)}(0)$, $k=0, \ldots , 2M-1$. These measurements are usually difficult to provide, it would be much better to use just function values of $f$.
 
\medskip

We want to apply now GOP in Theorem \ref{GOProM} to reconstruct $f$ in (\ref{cos1}) in a different way.
We employ the analytic function $\varphi(z)$ of the form
$$ \varphi(z) = \sum_{n=0}^{\infty} (-1)^{n} \frac{\tau^{2n} \, z^{n}}{(2n)!},$$
i.e., $\varphi(z^{2}) = \cos(\tau z)$, and observe that the application of $\varphi(A)$ to monomial functions $x^{m}$ gives
\begin{eqnarray*}
\varphi(A) \, x^{m} &=& \sum_{n=0}^{\infty} (-1)^{n} \frac{\tau^{2n}}{(2n)!} \left( - \frac{{\mathrm d}^2}{{\mathrm d}x^2} \right)^{n} \, x^{m} \\
&=& \sum_{0 \le 2n \le m}  \binom{m}{2n} 
\tau^{2n} \, x^{m-2n} \\
&=& \frac{1}{2} \left( \sum_{0 \le n' \le m} \binom{m}{n'}  \tau^{n'} \, x^{m-n'} + \sum_{0 \le n' \le m}   \binom{m}{n'} (-\tau)^{n'} \, x^{m-n'} \right) \\
&=& \frac{1}{2} \left( (x+\tau)^{m} + (x-\tau)^{m} \right) 
= \frac{1}{2} ( S_{\tau} + S_{-\tau}) \, x^{m}
\end{eqnarray*}
with the shift operator $S_{\tau}$ given by $S_{\tau} f = f(\cdot + \tau)$.
Thus we have
$$ \varphi(A) = \frac{1}{2} ( S_{\tau} + S_{-\tau}) $$
and by Theorem \ref{th_spectral_mapping} it follows that
\begin{eqnarray*} \varphi(A) \, \cos(\alpha \cdot) &=& \frac{1}{2} ( S_{\tau} + S_{-\tau}) \, \cos(\alpha \cdot) = \frac{1}{2} ( \cos(\alpha(\cdot + \tau)) + \cos(\alpha(\cdot - \tau))) \\
&=&  \cos( \alpha \tau) \, \cos(\alpha \cdot), 
\end{eqnarray*}
i.e., the eigenvalues $\alpha^{2}$ of $A= - \frac{{\mathrm d}^{2}}{{\mathrm d} x^{2}}$ are transferred to $\cos(\tau \alpha)$. 
We can still identify $\alpha \in [0, C)$ uniquely from $\cos(\tau \alpha)$ if $\tau \le \frac{\pi}{C}$.

In order to apply GOP, we also need to fix  an admissible sampling matrix.
According to Lemma \ref{lem:1}, we can use an admissible set of sampling functionals 
\begin{equation}\label{sampi0} F_{k} = F (\varphi(A)^{k} ) = F \, ( \frac{1}{2} \left( S_{\tau} + S_{-\tau}) \right)^{k} = F \, \left( \frac{1}{2^{k}} \sum_{r=0}^{k} \binom{k}{r} \, S_{(k-2r)\tau} \right)
\end{equation}
and arrive with the point evaluation functional $F f: = f(0) $ at the sampling matrix $\left( F_{k} ( \varphi(A)^{\ell} f)\right)_{k=0, \ell=0}^{M-1,M}$ with entries
$$ F_{k} ( \varphi(A)^{\ell} f ) =   F (\varphi(A)^{k+\ell} f )
=    \frac{1}{2^{k+\ell}} \sum_{r=0}^{k+\ell} \binom{k+\ell}{r} \, f( (k+\ell-2r)\tau)  .
$$
This matrix involves the function samples $f(k\tau)$, $-2M+1 \le k \le 2M-1$. Since $f$ in (\ref{cos1}) is symmetric, it is sufficient to provide $f(k\tau)$, $k=0, \ldots , 2M-1$. Indeed,
\begin{eqnarray*}   F (\varphi(A)^{k+\ell} f)
&=&  \sum_{j=1}^{M} c_{j} \, F (\varphi(A)^{k+\ell}  \cos(\alpha_{j} \cdot)) \\
&=&  \sum_{j=1}^{M} c_{j} \, (\cos(\alpha_{j} \tau ))^{\ell+k} \, F (\cos(\alpha_{j} \cdot)) =  \sum_{j=1}^{M} c_{j} \, (\cos(\alpha_{j} \tau ))^{\ell+k}
\end{eqnarray*}
yields that the sampling matrix can be simply factorized, and all matrix factors have full rank $M$.
\smallskip

We can employ a different sampling matrix by taking 
$$F_{k} (f) = ((S_{k\tau}+ S_{-k\tau}) f)(0)$$ 
instead of (\ref{sampi0}) and get the matrix entries
\begin{equation}\label{sampi1}
(( S_{k\tau} + S_{-k\tau}) (\varphi(A)^{\ell} f))(0) = \frac{1}{2^{\ell}} \sum_{r=0}^{\ell} \binom{\ell}{r} [ f((\ell + k -2r) \tau) + f(\ell - k- 2r) \tau ) ]. 
\end{equation}
For $f$ of the form (\ref{cos1})  this sampling matrix is also admissible  since we obtain with the Chebyshev polynomial $T_{k}(z) := \cos ( k (\arccos z))$ that 
\begin{eqnarray*}
& & \hspace*{-20mm} (( S_{k\tau} + S_{-k\tau}) \varphi(A)^{\ell} f)(0) \\
&=& \frac{1}{2^{\ell}} \sum_{r=0}^{\ell} \binom{\ell}{r} \sum_{j=1}^{M} c_{j}[ \cos(\alpha_{j}(\ell + k -2r) \tau) + \cos(\alpha_{j}(\ell - k- 2r) \tau ) ] \\
&=& \sum_{j=1}^{M} c_{j} \left( \frac{2}{2^{\ell}}  \sum_{r=0}^{\ell} \binom{\ell}{r} \cos(\alpha_{j}(\ell-2r)\tau) \right) \, \cos(\alpha_{j} k \tau) \\
&=& \sum_{j=1}^{M} c_{j} \left( \frac{2}{2^{\ell}}  \sum_{r=0}^{\ell} \binom{\ell}{r} T_{|\ell-2r|}(\cos(\alpha_{j}\tau)) \right) \, \cos(\alpha_{j} k \tau) \\
&=& 2 \sum_{j=1}^{M} c_{j} \, (\cos(\alpha_{j} \tau))^{\ell} \, \cos(\alpha_{j} k \tau),
\end{eqnarray*} 
where we have used the identity $x^{\ell} = \frac{1}{2^{\ell}}\sum_{r=0}^{\ell} \binom{\ell}{r} T_{|\ell-2r|}(x)$. Thus
{\small 
$$ \left( ((S_{k\tau} + S_{-k\tau}) \varphi(A)^{\ell} f)(0)\right)_{k=0,\ell=0}^{M-1,M} = (\cos(\alpha_{j} k \tau))_{k=0,j=1}^{M-1,M} \, \textrm{diag} \, (2c_{j})_{j=1}^{M} \, ((\cos(\alpha_{j} \tau))^{\ell})_{j=1,\ell=0}^{M,M},
$$
}
where all matrix factors have full rank $M$.
The sampling matrix in (\ref{sampi1}) applies the idea that instead of 
$F_{k}(f) = F( \varphi(A)^{k} f)$, $k=0, \ldots, M-1$, we can also use 
$$ F_{k}(f) = F(p_{k}(\varphi(A) f)), \qquad k=0, \ldots , M-1,$$
with a basis $\{p_{k}\}_{k=0}^{M-1}$ of the space of algebraic polynomials up to degree $M-1$. Here, (\ref{sampi1}) is obtained by using the basis of Chebyshev polynomials $p_{k}=T_{k}$, $k=0, \ldots , M-1$. 

\begin{remark}
A slightly different sampling scheme was applied in \cite{Potts.2014} and in \cite{PSK18}, where the Prony polynomial has been written using a  Chebyshev polynomial basis instead of the monomial basis.
\end{remark}

\section{GOP for special linear differential operators of first and second order} 
\label{sec_GEProM}
\setcounter{equation}{0}

In this section we discuss the application of GOP for the recovery of expansions into eigenfunctions of linear differential operators.
In this case, we will mainly apply iteration operators that are constructed using $\varphi(z) = \exp(\tau z)$ and $\varphi(z) = \cos(\tau z^{1/2})$.
We will show that the obtained iteration operators are generalized shift operators that enable us to recover the considered expansions using only function values instead of derivative values.
We will consider sampling functionals $F_{k}: {\mathcal M} \to {\mathbb C}$ of the form 
$$ F_{k} (f)= F(\varphi(A)^{k} f). $$
With this sampling, GOP is equivalent with the  generalized Prony method  for $\varphi(A)$ (instead of $A$)  and a fixed functional $F$ that only needs to satisfy the assumptions of Theorem \ref{GPM}.
Then,  the corresponding sampling matrix is always admissible for all $f \in {\mathcal M}(A)$ in (\ref{ma}), and we need the values
$F((\varphi(A)^{k} f)$, $k=0, \ldots , 2M-1$ to reconstruct $f$ in (\ref{genexpo}).

\subsection{Differential operators of first order and generalized shifts}
\label{sec_Generalized_Shifts}

Assume that  $G: I \to J \subset {\mathbb R}$ is in $C^{\infty}(I)$ and that its first derivative $G'(x)$ has no zero on $I$.  This means in particular that $g(x) = 1/G'(x)$ is well-defined  on $I$.
Moreover, $G(x)$ is strictly monotone on $I$ such that $G^{-1}(x)$ is also well-defined on $I$. Further, let $H \in C^{\infty}(I)$.
\smallskip

We want to reconstruct  functions of the form 
\begin{equation}\label{genex}
f(x) = \sum_{j=1}^{M} c_{j} \, {\mathrm e}^{H(x) + \lambda_{j} G(x)}, 
\end{equation}
i.e.,  we want to recover the parameters $c_{j} \in {\mathbb C} \setminus \{0 \}$ and $\lambda_{j} \in {\mathbb R} + {\mathrm i} [-C,C)$.
We define the functions 
\begin{equation}\label{gh} g(x) := \frac{1}{G'(x)}, \qquad h(x) := - \frac{H'(x)}{G'(x)}. 
\end{equation}
Then $v_{\lambda_{j}}(x) := {\mathrm e}^{H(x) +\lambda_{j} G(x)}$  are eigenfunctions of 
\begin{equation}\label{4.1.1}
A = g(\cdot) \frac{ {\mathrm d} }{{\mathrm d} x} + h(\cdot), 
\end{equation}
since we have for all $\lambda \in {\mathbb C}$, 
\begin{eqnarray}\nonumber 
A \, v_{\lambda}(x) &=& \Big(g(x) \frac{{\mathrm d}}{{\mathrm d} x} + h(x)\Big) \, {\mathrm e}^{H(x) + \lambda G(x)} \\
\label{gx}
&=& g(x) \, {\mathrm e}^{H(x) + \lambda G(x)}\, \frac{(-h(x)+ \lambda)}{g(x)} + h(x) \, {\mathrm e}^{H(x) + \lambda G(x)} = \lambda \, v_{\lambda}(x). 
\end{eqnarray}
We can therefore apply the  generalized Prony method to recover (\ref{genex}), and with the operator $A$ in (\ref{4.1.1}) this leads to a recovery scheme  that involves the samples 
$$ F \Big( \Big(g(\cdot) \, \frac{{\mathrm d}}{{\mathrm d} x} + h(\cdot)\Big)^{k } f \Big), \qquad k=0, \ldots , 2M-1.$$
However, these samples  may be difficult to provide.

We therefore apply the GOP approach  with $\varphi(z) = \exp(\tau z)$. For $f$ of the form (\ref{genex})
it follows that 
\begin{eqnarray}
\nonumber
{\mathrm e}^{\tau A} f(x) &=& {\mathrm e}^{\tau (g(\cdot) \, \frac{{\mathrm d}}{{\mathrm d} x} + h(\cdot))} f(x) =
\sum_{\ell=0}^{\infty} \frac{\tau^{\ell}}{\ell!} \, \Big( g(\cdot) \,  \frac{{\mathrm d}}{{\mathrm d} x} + h(\cdot)\Big)^{\ell} \Big( \sum_{j=1}^{M} c_{j} \, {\mathrm e}^{H(x) + \lambda_{j} G(x)}\Big) \\
\nonumber
&=& \sum_{j=1}^{M} c_{j} \, \Big( \sum_{\ell=0}^{\infty} \frac{\tau^{\ell}}{\ell!} \, \lambda_{j}^{\ell} \Big) \, {\mathrm e}^{H(x) + \lambda_{j} G(x)} 
= \sum_{j=1}^{M} c_{j} \, {\mathrm e}^{\lambda_{j} \tau} \, {\mathrm e}^{H(x) + \lambda_{j} G(x)} \\
\nonumber
&=& {\mathrm e}^{H(x) - H(G^{-1}(\tau + G(x)))} \sum_{j=1}^{M} c_{j} \, {\mathrm e}^{H(G^{-1}(\tau + G(x)))+\lambda_{j} G(G^{-1}(\tau + G(x)))} \\
\label{4.go}
&=& {\mathrm e}^{H(x) - H(G^{-1}(\tau + G(x)))} \, f(G^{-1}(\tau + G(x))). 
\end{eqnarray}
Thus, the iteration operator $\varphi(A)$ of $A$  is the generalized shift operator $S_{G,H,\tau}: C({\mathbb R}) \to C({\mathbb R}) $ with 
\begin{equation}\label{shift1} S_{G, H, \tau}f (x) := \varphi(A) f(x) = {\mathrm e}^{\tau A} f (x) = {\mathrm e}^{H(x) - H(G^{-1}(\tau + G(x)))} \, f(G^{-1}(\tau + G(x))).
\end{equation}
This observation enables us to reconstruct $f$ in (\ref{genex})  using  function values  instead of derivative values.

\begin{theorem} \label{theo4.1}
Let $G: I \to J \subset {\mathbb R}$ be in $C^{\infty}(I)$ with $|G'(x)| > 0$ for all $x \in I$, and  $H \in C^{\infty}(I)$. 
Further,  for some fixed $x_{0} \in I$ and $0 < |\tau| \le \pi/C$ let $\tau k + G(x_{0}) \in G(I)$ for $k=0, \ldots , 2M-1$, where 
$G(I):=\{ g(x): \, x \in I \}$ denotes the image of $G$.
Then $f$ in $(\ref{genex})$ with $|\textrm{Im}\, \lambda_{j} | \le C$  can be uniquely reconstructed  from the function samples $f(G^{-1}(\tau k + G(x_{0})))$, $k=0, \ldots , 2M-1$. 
\end{theorem}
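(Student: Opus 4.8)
The plan is to apply the generalized Prony method (Theorem \ref{GPM}), equivalently GOP (Theorem \ref{GOProM}), to the iteration operator $\varphi(A)$ generated by $\varphi(z) = \exp(\tau z)$, where $A$ is the first-order differential operator in (\ref{4.1.1}), together with the point evaluation functional $F := F_{x_0}$, $F_{x_0} g := g(x_0)$. By (\ref{gx}) the functions $v_{\lambda}(x) = \mathrm{e}^{H(x) + \lambda G(x)}$ are eigenfunctions of $A$ to the eigenvalue $\lambda$ for every $\lambda \in \mathbb{C}$, and since $G(x_0)$ is a finite real number, $F_{x_0}(v_\lambda) = \mathrm{e}^{H(x_0) + \lambda G(x_0)} \neq 0$; hence $F_{x_0}$ satisfies the hypothesis of Theorem \ref{GPM}, and $F_k := F_{x_0}\, \varphi(A)^k$ is an admissible set of sampling functionals for $\varphi(A)$.

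First I would check that $\varphi(z)=\exp(\tau z)$ is injective on the eigenvalue range $\sigma(A) := \mathbb{R} + \mathrm{i}[-C, C)$ dictated by (\ref{genex}): if $\exp(\tau z_1) = \exp(\tau z_2)$ then $\tau(z_1 - z_2) \in 2\pi\mathrm{i}\mathbb{Z}$, hence $z_1 - z_2 \in \frac{2\pi}{\tau}\mathrm{i}\mathbb{Z}$; because $0 < |\tau| \le \pi/C$ we have $\frac{2\pi}{|\tau|} \ge 2C$, and a half-open interval $[-C, C)$ of length $2C$ cannot contain two points whose imaginary parts differ by a nonzero multiple of $\frac{2\pi}{\tau}$, so $z_1 = z_2$. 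By Theorem \ref{th_spectral_mapping}, $\varphi(A)$ is then a well-defined iteration operator on $\mathcal{M}(A)$ with $\varphi(A) v_\lambda = \mathrm{e}^{\tau\lambda} v_\lambda$, and each active eigenvalue $\lambda_j$ is recovered uniquely from $\mathrm{e}^{\tau\lambda_j}$.

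Next I would identify the powers of $\varphi(A)$ explicitly. Since $\varphi(A)$ acts on $\mathcal{M}(A)$ by $v_\lambda \mapsto \mathrm{e}^{\tau\lambda}v_\lambda$, its $m$-th power acts by $v_\lambda \mapsto \mathrm{e}^{m\tau\lambda}v_\lambda$, i.e.\ $\varphi(A)^m = \mathrm{e}^{m\tau A} = S_{G,H,m\tau}$; repeating the computation that led to (\ref{4.go})--(\ref{shift1}) with $\tau$ replaced by $m\tau$ gives, for $f$ in (\ref{genex}),
$$ \varphi(A)^m f(x) = \sum_{j=1}^{M} c_j \, \mathrm{e}^{m\tau\lambda_j}\, \mathrm{e}^{H(x) + \lambda_j G(x)} = \mathrm{e}^{H(x) - H(G^{-1}(m\tau + G(x)))}\, f\big(G^{-1}(m\tau + G(x))\big). $$
Evaluating at $x_0$, and using the hypothesis $m\tau + G(x_0) \in G(I)$ for $m = 0, \ldots, 2M-1$ so that $G^{-1}(m\tau + G(x_0)) \in I$ is meaningful, the required samples are
$$ F_{x_0}\big(\varphi(A)^m f\big) = \mathrm{e}^{H(x_0) - H(G^{-1}(m\tau + G(x_0)))}\, f\big(G^{-1}(m\tau + G(x_0))\big), \qquad m = 0, \ldots, 2M-1. $$
As $G$, $H$, $x_0$, $\tau$ are known, the prefactor is a known nonzero number, so all $2M$ values $F_{x_0}(\varphi(A)^m f)$ are obtained from the given function samples $f(G^{-1}(\tau k + G(x_0)))$, $k = 0, \ldots, 2M-1$ (note $k+\ell$ ranges over $0,\ldots,2M-1$).

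Finally I would invoke Theorem \ref{GPM}/\ref{GOProM} for the operator $\varphi(A)$: the matrix $(F_{x_0}(\varphi(A)^{k+\ell} f))_{k=0,\ell=0}^{M-1,M}$ has full rank $M$, so one computes the coefficient vector $\mathbf{p}$ of $P_\varphi(z) = \prod_{j=1}^M(z - \mathrm{e}^{\tau\lambda_j})$, extracts its zeros $\mathrm{e}^{\tau\lambda_j}$, recovers $\lambda_j$ by the injectivity established above, and then determines the $c_j$ from the overdetermined Vandermonde system, which proves unique reconstructibility. The Prony machinery itself is supplied verbatim by the earlier theorems; the steps that actually need attention are the two legitimacy checks -- injectivity of $\varphi$ on the prescribed eigenvalue range (where both the half-open interval and the bound $|\tau| \le \pi/C$ are essential) and the domain condition guaranteeing that every argument $G^{-1}(m\tau + G(x_0))$ lies in $I$ -- which is precisely the role of the hypotheses on $\tau$, $x_0$ and $M$.
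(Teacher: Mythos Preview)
Your proposal is correct and follows essentially the same route as the paper: apply GOP with the iteration operator $\varphi(A)=\exp(\tau A)=S_{G,H,\tau}$, the point evaluation functional $F_{x_0}$, and the sampling functionals $F_k=F_{x_0}\,\varphi(A)^k$, then observe that $\varphi(A)^m=S_{G,H,m\tau}$ so that the required Hankel entries are, up to known nonzero prefactors, the given function samples $f(G^{-1}(\tau k+G(x_0)))$. Your treatment of the injectivity of $\varphi$ on the eigenvalue strip is in fact slightly more explicit than the paper's, but otherwise the argument coincides step for step.
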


\begin{proof}
Taking the differential operator $A$ in (\ref{4.1.1}) with $g$ and $h$ as in (\ref{gh}), it follows from (\ref{gx}) that ${\mathrm e}^{H(x) + \lambda_{j} G(x)}$ are eigenfunctions of $A$ to the pairwise distinct eigenvalues $\lambda_{j}$.  
As shown in (\ref{4.go}), we can apply $\varphi(z)=\exp(\tau z)$ and obtain the generalized shift operator $\varphi(A) =S_{G,H, \tau}$ in (\ref{shift1}).
One important consequence of the computations in (\ref{4.go})  is the observation that also
$$ \varphi(A)^{k} f = {\mathrm e}^{ \tau \, k A} f = {\exp} \left( \tau \, k \,  \Big(g(\cdot) \, \frac{{\mathrm d}}{{\mathrm d} x} + h(\cdot) \Big) \right) f = S_{G, H, k\tau}\,  f $$
holds. 
Therefore, we have $S_{G,H,\tau}^{k}= S_{G,H, k\tau}$, see also \cite{PSK18} for a different proof.
We apply now Theorem \ref{GOProM} to $f$ in (\ref{genex}) with the operator $\varphi(A) = S_{G,H,\tau}$, the point evaluation functional $F (f)= f(x_{0})$, and with $F_{k} (f) := F(\varphi(A)^{k} f)$.
By Theorem \ref{th_spectral_mapping}, the eigenfunctions
 ${\mathrm e}^{H(x) + \lambda_{j} G(x)}$  of $A= g(\cdot) \frac{{\mathrm d}}{{\mathrm d} x} + h(\cdot)$  to the eigenvalues $\lambda_{j}$ are also eigenfunctions of $S_{G, H,\tau}$, now to the eigenvalues ${\mathrm e}^{\lambda_{j} \tau}$. We only need to pay attention that these new eigenvalues are pairwise distinct. Since $\lambda_{j} \in {\mathbb R} + {\mathrm i} [-C,C)$, this is satisfied if $0 < \tau \le \frac{\pi}{C}$.
Therefore the mapping from ${\mathrm e}^{\lambda_{j} \tau}$ to $v_{\lambda_{j}} = {\mathrm e}^{H(\cdot) + \lambda_{j} G(\cdot)}$ is bijective.
Finally, $F (v_{\lambda_{j}})= v_{\lambda_{j}}(x_{0}) =  {\mathrm e}^{H(x_{0}) + \lambda_{j} G(x_{0})} \neq 0$. Hence, the sampling matrix 
\begin{eqnarray*}
 & & (F(\varphi(A)^{k+\ell}f))_{k,\ell=0}^{M-1,M} = ((S_{G,H,\tau(k+ \ell)} f)(x_{0}))_{k,\ell=0}^{M-1,M} \\
&=& \left({\mathrm e}^{H(x) - H(G^{-1}(\tau(k+ \ell) + G(x_{0})))} \, f(G^{-1}(\tau (k+\ell)+ G(x_{0}))\right)_{k,\ell=0}^{M-1,M}
\end{eqnarray*}
is admissible by Lemma \ref{lem:1} and is already determined by 
the well-defined sampling values  $f(G^{-1}(\tau k + G(x_{0})))$, $k=0, \ldots , 2M-1$. Thus, Theorem \ref{GOProM} can be applied and the assertion follows.
\end{proof}

\begin{remark}
If the generalized shift operator $S_{G,H,\tau}$ is used to recover the expansion $f$ in $(\ref{genex})$, then the assumptions on $G$ and $H$ can be relaxed. It is sufficient to have continuous functions $G$ and $H$, where $G$ is monotone on $I$.
\end{remark}

\begin{example}
We want to recover  an expansion of the form 
\begin{equation}\label{excos} f(x) = \sum_{j=1}^{M} c_{j} \, {\mathrm e}^{\lambda_{j} \cos(x)}
\end{equation}
and have to find the parameters $c_{j} \in {\mathbb C} \setminus \{ 0 \}$ and $\lambda_{j} \in {\mathbb R} + {\mathrm i} [-\pi, \, \pi)$ by employing Theorem \ref{theo4.1}.
We take $G(x) := \cos(x)$ which is  monotone on $[0,\pi]$, i.e., we can choose $I=[0,\pi]$ and $G(I)=[-1, \, 1]$. 
Then, $G: I \to G(I)$ is bijective, and  $G^{-1}(x) = \arccos(x)$ is well-defined  as a function from $G(I)$ onto $I$.  Further, let $H(x) :=0$.  
Taking $g(x) := \frac{1}{G'(x)} =\frac{-1}{\sin x}$   and $h(x) := 0$,   we conclude that the functions ${\mathrm e}^{\lambda_{j} \cos(x)}$ in the expansion (\ref{excos}) are eigenfunctions of the differential operator $A = - \frac{1}{\sin(x)} \, \frac{\mathrm d}{{\mathrm d} x}$. We apply $\varphi(z) = \exp(\tau z)$ and obtain
the generalized shift operator of the form 
$$ \varphi(A) f(x) = S_{\cos, 0, \tau} f (x) = f(\arccos(\tau + \cos(x))). $$
We choose $x_{0}=0$, i.e., $G(x_{0}) = 1$, and $\tau= -\frac{1}{M}$ such that the values $\cos(x_{0})+ k \tau = 1 - k/M \in G(I)$ for $0 \ldots , 2M-1$. Thus 
$$ S_{\cos, 0, \tau}^{k}f (x_{0}) = S_{\cos,0,  k\tau}f(0) = f(\arccos(k\tau + 1)), \qquad k=0, \ldots , 2M-1,$$
are well-defined.
According to Theorem \ref{theo4.1}, $f(x)$ in (\ref{excos}) is already completely described by these values.
In this case, ${\mathrm e}^{\lambda_{j} \cos(x)}$ are eigenfunctions to $S_{\cos, 0,  \tau}$ corresponding to the eigenvalues ${\mathrm e}^{{\lambda_{j} \tau}}$. Therefore, defining the Prony polynomial
$$ P_{\cos}(z) = \prod_{j=1}^{M} ( z - {\mathrm e}^{\lambda_{j} \tau} ) = \sum_{\ell=0}^{M} p_{\ell} \, z^{\ell} $$
we find with (\ref{excos})
\begin{eqnarray*}
\sum_{\ell=0}^{M} p_{\ell} \, f(\arccos(1+(m+\ell)\tau)) &=& \sum_{\ell=0}^{M} p_{\ell} \, \sum_{j=1}^{M} c_{j} \, {\mathrm e}^{\lambda_{j}(\cos(\arccos(1+(m+\ell)\tau)))}\\
&=& \sum_{j=1}^{M} c_{j} {\mathrm e}^{\lambda_{j} (1+m \tau)} \sum_{\ell=0}^{M} p_{\ell} \, {\mathrm e}^{\lambda_{j} \ell \tau} =0
\end{eqnarray*}
for $m=0, \ldots , M-1$. This homogeneous  linear system provides  the coefficients $p_{0}$, $\ldots$, $p_{M-1}$, and $p_{M}=1$ of $P_{\cos}(z)$. Having found $P_{\cos}(z)$, we can extract its zeros ${\mathrm e}^{\lambda_{j} \tau}$, recover $\lambda_{j}$ and finally find $c_{j}$ by solving a linear system for the given function values. $\Box$
\end{example}

\begin{example}
We want to recover an expansion into shifted Gaussians of the form
\begin{equation}\label{gauss}
 f(x) = \sum_{j=1}^{M} c_{j} \, {\mathrm e}^{-\alpha(x-\lambda_{j})^{2}}, 
 \end{equation}
where we assume that $\alpha \in {\mathbb R} \setminus \{ 0 \}$ is given beforehand, and we need to reconstruct $c_{j} \in {\mathbb C}\setminus \{ 0\}$
and $\lambda_{j} \in {\mathbb R}$, $j=1, \ldots , M$. 
 By direct comparison we have
$ {\mathrm e}^{-\alpha(x-\lambda_{j})^{2}} = {\mathrm e}^{\lambda_{j}^{2}}\, {\mathrm e}^{H(x) + \lambda_{j} G(x)}$
with 
$$ H(x) = -\alpha x^{2} , \qquad G(x) = 2 \alpha x, $$
and with the linear factor ${\mathrm e}^{\lambda_{j}^{2}}$.
Thus, taking $g(x) := 1/G'(x) = 1/(2 \alpha) $ and $h(x) := H'(x)/G'(x) = -x$, it follows that  $v_{\lambda_{j}}(x) =  {\mathrm e}^{-\alpha(x-\lambda_{j})^{2}}$ satisfies the differential equation 
$$  \left( \frac{1}{2\alpha} \frac{{\mathrm d}}{{\mathrm d} x}  - x  \right) \, v_{\lambda_{j}}(x) = \lambda \, v_{\lambda_{j}}(x), $$
i.e., ${\mathrm e}^{\alpha(x- \lambda_{j})^{2}}$ are eigenfunctions of the operator $A$ in (\ref{4.1.1}) with $g$ and $h$ as above. 
According to Theorem \ref{theo4.1} we can therefore recover the expansion into shifted Gaussians in (\ref{gauss}) using the function samples 
$$ f(G^{-1}(k \tau + G(0)) = f \Big(\frac{\tau}{2 \alpha} k \Big), \qquad k=0, \ldots , 2M-1,$$ 
where we have taken $x_{0}=0$ and arbitrary  real step size $\tau \neq 0$, since $G(x)$ is monotone on ${\mathbb R}$ and the eigenvalues ${\mathrm e}^{\lambda_{j} \tau}$ are real, see also \cite{PSK18}, Section 4.1.   $\Box$
 \end{example}
 
\begin{remark}
We mention that there are other approaches to recover expansions into shifted Gaussians, see e.g. \cite{Vetterli.2002}.
When one is interested in approximation of functions by sparse sums of the form (\ref{gauss}), the question occurs, whether arbitrarily narrow Gauss pulses be constructed by linearly combining arbitrarily wider Gauss pulses.
This question has been recently discussed in \cite{FP14}.
\end{remark}

The approach to consider  eigenfunctions of the  form $v_{\lambda}(x) = {\mathrm e}^{H(x) + \lambda G(x)}$  for differentiable functions $G(x)$ and $H(x)$, where $G(x)$ is strictly monotone on some interval $I$ opens the way  to recover many different expansions  of the form (\ref{genex})  using only special function values of $f$. In Table 1, we summarize  some examples for $g(x)$, $G(x)$, and arbitrary $H(x)$ (resp. $h(x)$), the corresponding  eigenfunctions $v_{\lambda}$ as well as the needed function samples for GOP.

\begin{table}[ht]
  \centering
    \begin{tabular}{| c | c | l | l | l | l |}
    \hline
    $g(x)$  & $G(x)$ & eigenfunctions $v_{\lambda}$ &
    sampling values\\[0.1cm]
    \hline&&&\\[-1em] $1/x$ & $-\frac{1}{2}x^2$ &
    $\exp(H(x)-\frac{\lambda}{2}x^2)$& $f\left(\sqrt{-k \tau + x_0}\right)$\\[0.1cm]
    \hline&&&\\[-1em] $1$ & $x$ &$\exp(H(x) + \lambda x)$&
    $f\left(k \tau +x_0\right)$\\[0.1cm]
    \hline&&&\\[-1em] $x$ & $\log(x)$ & $ {\mathrm e}^{H(x)} \, x^{\lambda}$ &
    $f\left(e^{k \tau}x_0\right)$\\[0.1cm]
    \hline&&&\\[-1em] $x^{p}$ $(p \neq 1)$ & $\frac{x^{1-p}}{1-p}$ & $ \exp(H(x)+ \lambda x^{1-p}/(1-p))$ &
    $f((1-p) \tau k + x_{0}^{1-p})^{1/1-p})$\\[0.1cm]    
    \hline&&&\\[-1em] $-\sqrt{1-x^2}$ & $\arccos(x)$ & $\exp(H(x) + \lambda\arccos(x))$
    & $f(\cos(k\tau+\arccos(x_0)))$\\[0.1cm]
    \hline&&&\\[-1em] $\sqrt{1-x^2}$ & $\arcsin(x)$ & $\exp(H(x) + \lambda\arcsin(x))$
    & $f\left(\sin(k\tau+\arcsin(x_0))\right)$\\[0.1cm]
    \hline&&&\\[-1em] $\sqrt{x^{2}-1}$ & $\textrm{arcosh} \, (x)$ & $ \exp (H(x) + \lambda \, \textrm{arcosh} \,  (x)) $ &
    $f\left(\cosh(k\tau  + \textrm{arcosh}(x_0))\right)$\\[0.1cm]
     \hline&&&\\[-1em] $\sqrt{x^{2}+1}$ & $\textrm{arsinh} \, (x)$ & $ \exp (H(x) + \lambda \, \textrm{arsinh} \,  (x)) $ &
    $f\left(\sinh(k\tau  + \textrm{arsinh}(x_0))\right)$\\[0.1cm]
     \hline&&&\\[-1em] $\frac{1}{\cos(x)}$ & $\sin(x)$ &
    $\exp(H(x) + \lambda\sin(x))$ & $f\left(\arcsin(k\tau+\sin({x}_{0}))\right)$\\[0.1cm]
    \hline&&&\\[-1em] $-\frac{1}{\sin(x)}$ & $\cos(x)$ & $\exp(H(x)+ \lambda\cos(x))$
    & $f\left(\arccos(k\tau+\cos({x_{0}}))\right)$\\[0.1cm]
    \hline&&&\\[-1em] $-\frac{1}{\cosh(x)}$ & $\sinh(x)$ & $\exp(H(x)+ \lambda\sinh (x))$
    & $f\left(\textrm{arsinh}(k\tau+\sinh({x_{0}}))\right)$\\[0.1cm]
\hline&&&\\[-1em] $-\frac{1}{\sinh(x)}$ & $\cosh(x)$ & $\exp(H(x)+ \lambda\cosh (x))$
    & $f\left(\textrm{arcosh}(k\tau+\cosh({x_{0}}))\right)$\\[0.1cm]

    \hline 
    \end{tabular}
    \begin{center}
  \caption{Examples of operators $A=g(\cdot) \frac{\mathrm d}{{\mathrm d} x} + h(\cdot)$, corresponding eigenfunctions} \textit{$v_{\lambda} = \exp(H(\cdot) + \lambda \, G(\cdot))$ and sampling values for $k=0, \ldots ,2M-1$ with\\ sampling parameter $\tau$ to recover expansions $f$ in $(\ref{genex})$.}
  \end{center}
  \label{tab:GEProM_Examples}
\end{table}

\subsection{Second order differential operators and generalized symmetric shifts}

We consider now the reconstruction problem  to find all parameters $c_{j} \in {\mathbb C} \setminus  \{ 0 \}$ and
$\lambda_{j} \in [0, C)$ of 
\begin{equation}\label{cos*}
f(x) = \sum_{j=1}^{M} c_{j} \, \cos( \lambda_{j} \, G(x)).
\end{equation}
As before, we assume that $G \in C^{\infty}(I)$ for some interval $I= [a, b] \subset {\mathbb R}$ and that $G'$ is strictly positive (or strictly negative) on $I$. Let $g(x):= 1/G'(x)$.
We consider now the special  differential operator of second order acting on $f(x)$ as follows
\begin{equation}\label{4.2A}
B f(x) := A^{2} f(x) = \left( \Big(g(\cdot) \frac{{\mathrm d} }{{\mathrm d} x} \Big)^{2} f \right)(x) = (g(x))^{2} f''(x) + g(x) \, g'(x) f'(x). 
\end{equation}
Similarly as in (\ref{gx}), we observe that the functions ${\mathrm e}^{{\mathrm i} \lambda G(x)}$ and ${\mathrm e}^{-{\mathrm i} \lambda G(x)}$ are  the two eigenfunctions of $B$ to the eigenvalue $-\lambda^{2}$.
Therefore, also $\cos (\lambda\, G(x))$ and $\sin (\lambda \, G(x))$ are eigenfunctions of $B$ to $-\lambda^{2}$.

In order to ensure that the map from eigenvalues to eigenfunctions $-\lambda^{2} \to v_{\lambda}$ is bijective, we restrict ourselves to the eigenfunctions $\cos(\lambda \, G(x))$  with $\lambda \ge 0$. 
\medskip

Then, the function $f$ in (\ref{cos*}) can be understood as an expansion into eigenfunctions $\cos( \lambda_{j} \, G(x))$ of the operator $B$ in (\ref{4.2A}), and 
according to the generalized Prony method in Theorem \ref{GPM}, we can reconstruct  $f$  using the values $F \Big( (g(\cdot)  \frac{{\mathrm d} }{{\mathrm d} x})^{2k} f \Big)$, $k=0, \ldots , 2M-1$ with some suitable functional $F: C^{\infty}(I) \to {\mathbb C}$.
\smallskip

We want to apply GOP to derive a simpler reconstruction scheme.
We take the analytic function $\varphi(z) = \cos(\tau z^{1/2})$ and obtain for $f$ in (\ref{cos*}) according to (\ref{4.go})
\begin{eqnarray*}
\varphi(B) f (x) &=& \varphi(A^{2}) f (x) = 
  \cos(\tau  A) f(x) \\
  &=& \frac{1}{2} \left[ \exp \Big(\tau g(\cdot) \frac{{\mathrm d} }{{\mathrm d} x} \Big) + \exp\Big(-\tau g(\cdot) \frac{{\mathrm d} }{{\mathrm d} x} \Big) \right] f (x)\\
&=& \frac{1}{2} \left[ f(G^{-1}(\tau + G(x))) + f(G^{-1}(-\tau + G(x))) \right].
\end{eqnarray*}
Thus, we find  here a symmetric generalized shift operator 
$$ S_{G,\tau}^{sym} f := \frac{1}{2} \left[ f(G^{-1}(\tau + G(\cdot))) + f(G^{-1}(-\tau + G(\cdot))) \right] $$
as an iteration operator of $B$, and $f$ in (\ref{cos*}) can also be  understood as a sparse  expansion into eigenfunctions of the operator $S_{G,\tau}^{sym}$ to the eigenvalues $\varphi(-\lambda_{j}^{2}) = \cos(\tau \lambda_{j})$. This observation 
enables us to reconstruct  $f$ in (\ref{cos*})  using only function values of $f$ instead of derivative values.
\medskip

\begin{theorem}\label{theo4.2}
Let $G: I \to J \subset {\mathbb R}$ be in $C^{\infty}(I)$ with $|G'(x)| > 0$ for all $x \in I$. 
Assume further, that for some fixed $x_{0} \in I$ we have $\cos (\lambda G(x_{0})) \neq 0$ for all $\lambda \in [0, C)$, and for a fixed $\tau$ with $0 < |\tau| \le \pi/C$ we have $\tau k + G(x_{0}) \in G(I)$ for $k=-2M+1, \ldots , 2M-1$.
Then the parameters $c_{j} \in {\mathbb C} \setminus \{ 0 \}$ and $\lambda_{j} \in [0, C)$, $j=1, \ldots, M$, of $f$ in $(\ref{cos*})$  can be uniquely reconstructed  from the samples $f(G^{-1}(\tau k + G(x_{0})))$, $k=-2M+1, \ldots , 2M-1$.
\end{theorem}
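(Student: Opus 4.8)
The plan is to mirror the structure of Theorem \ref{theo4.1}, now applying the GOP machinery (Theorem \ref{GOProM}) with the second-order operator $B = A^2$ in place of $A$, the analytic function $\varphi(z) = \cos(\tau z^{1/2})$, and the iteration operator $\Phi = \varphi(B) = S_{G,\tau}^{sym}$ already identified in the discussion preceding the statement. First I would record that, with $g(x) := 1/G'(x)$, the operator $A = g(\cdot)\frac{\mathrm d}{{\mathrm d}x}$ satisfies $A\,{\mathrm e}^{{\mathrm i}\lambda G(x)} = {\mathrm i}\lambda\,{\mathrm e}^{{\mathrm i}\lambda G(x)}$ (this is the $H \equiv 0$, $h \equiv 0$ case of \eqref{gx}), so that $B\,{\mathrm e}^{\pm{\mathrm i}\lambda G(x)} = -\lambda^2\,{\mathrm e}^{\pm{\mathrm i}\lambda G(x)}$ and hence $B\cos(\lambda G(x)) = -\lambda^2\cos(\lambda G(x))$. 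Thus $f$ in \eqref{cos*} lies in ${\mathcal M}(B)$, the active eigenvalues being $-\lambda_j^2$ and the eigenfunctions $v_{\lambda_j} = \cos(\lambda_j G(\cdot))$.

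Next I would verify the hypotheses of Theorem \ref{GOProM}. The set $\sigma(B) = \{-\lambda^2 : \lambda \in [0,C)\}$ consists of pairwise different eigenvalues, and the map $-\lambda^2 \mapsto \cos(\lambda G(\cdot))$ is injective on this set. For the iteration operator: $\varphi(z) = \cos(\tau z^{1/2}) = \sum_{n\ge 0}(-1)^n\tau^{2n}z^n/(2n)!$ is entire in $z$, hence analytic on $\sigma(B)$, and the composition argument in \eqref{4.go} (applied once with step $\tau$ and once with step $-\tau$, then averaged) shows $\varphi(B) = S_{G,\tau}^{sym}$ as an operator on ${\mathcal M}(B)$, with $\varphi(B)\cos(\lambda G(\cdot)) = \cos(\tau\lambda)\cos(\lambda G(\cdot))$. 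Injectivity of $\varphi$ on $\sigma(B)$ amounts to injectivity of $\lambda \mapsto \cos(\tau\lambda)$ on $[0,C)$, which holds precisely because $0 < |\tau| \le \pi/C$ forces $\tau\lambda \in (-\pi,\pi)$, so $\cos$ is injective there; this also gives that the transferred eigenvalues $\cos(\tau\lambda_j)$ are pairwise distinct. The sampling functional is $F(f) := f(x_0)$ together with $F_k(f) := F(\varphi(B)^k f)$; here $F(v_{\lambda_j}) = \cos(\lambda_j G(x_0)) \neq 0$ by assumption, so $\{F_k\}$ is admissible by Lemma \ref{lem:1} (with $\psi = \varphi$).

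The remaining point — and the one requiring the most care — is to show that the entries of the admissible sampling matrix $(F(\varphi(B)^{k+\ell}f))_{k=0,\ell=0}^{M-1,M}$ are determined by the stated finite set of function samples. Iterating the key identity gives $\varphi(B)^m f = S_{G,\tau}^{sym}\!\big({}\cdots S_{G,\tau}^{sym} f\big)$ ($m$ times), and expanding the $m$-fold composition of the symmetric shift produces, for $f$ of the form \eqref{cos*}, a binomial-type sum $\varphi(B)^m f(x_0) = 2^{-m}\sum_{r=0}^{m}\binom{m}{r} f\big(G^{-1}((m-2r)\tau + G(x_0))\big)$ — exactly as in the cosine-expansion computation in Section \ref{sec:cos}, since on each eigenfunction $S_{G,\tau}^{sym}$ acts as multiplication by $\cos(\tau\lambda_j)$ and $\cos^m(\tau\lambda_j) = 2^{-m}\sum_r\binom{m}{r}\cos((m-2r)\tau\lambda_j)$. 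For $0 \le k \le M-1$ and $0 \le \ell \le M$ the index $m = k+\ell$ ranges over $0,\dots,2M-1$, so the arguments $(m-2r)\tau + G(x_0)$ with $0 \le r \le m$ range exactly over $\{j\tau + G(x_0) : j = -(2M-1),\dots,2M-1\}$, all of which lie in $G(I)$ by hypothesis, so $f(G^{-1}(j\tau + G(x_0)))$ is well-defined. Hence the sampling matrix is computable from the $4M-1$ listed samples, Theorem \ref{GOProM} applies, ${\mathbf p}$ and then the zeros $\cos(\tau\lambda_j)$, the $\lambda_j$, and finally the $c_j$ are recovered, completing the proof. The one genuine obstacle is the combinatorial bookkeeping that the sample indices land in the claimed range $\{-2M+1,\dots,2M-1\}$ and nowhere outside $G(I)$; everything else is a direct invocation of earlier results.
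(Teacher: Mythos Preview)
Your proposal is correct and follows essentially the same route as the paper: apply Theorem~\ref{GOProM} with the iteration operator $\varphi(B)=S_{G,\tau}^{sym}$, the point evaluation functional $F(f)=f(x_0)$, and $F_k=F(\varphi(B)^k)$, invoke Lemma~\ref{lem:1} for admissibility, and expand $(S_{G,\tau}^{sym})^{k+\ell}f(x_0)$ as a binomial sum over the values $f(G^{-1}((k+\ell-2r)\tau+G(x_0)))$. Two small remarks: first, your phrase ``$\tau\lambda\in(-\pi,\pi)$, so $\cos$ is injective there'' is literally false ($\cos$ is even), though your intended claim---that $\lambda\mapsto\cos(\tau\lambda)$ is injective on $[0,C)$---is correct because $\lambda\ge 0$ forces $\tau\lambda$ to lie in $[0,\pi)$ or $(-\pi,0]$; second, the binomial expansion of $(S_{G,\tau}^{sym})^m$ follows more directly from the operator identity $S_{G,\sigma}\circ S_{G,\tau}=S_{G,\sigma+\tau}$ (i.e.\ $e^{\sigma A}e^{\tau A}=e^{(\sigma+\tau)A}$) than from the eigenvalue argument you sketch, which as written needs the additional observation that $\sum_r\binom{m}{r}\sin((m-2r)\theta)=0$ to match $\cos^m(\tau\lambda_j)\cos(\lambda_jG(x_0))$ with the sampled values of $f$.
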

\begin{proof}
We  apply  Theorem \ref{GOProM}, where we use the operator $\varphi(B)=  \cos(\tau A) = S_{G,\tau}^{sym}$, the point evaluation functional $F f = f(x_{0})$, and the set of sampling functionals $F_{k} = F (\varphi(B)^{k})$, $k=0, \ldots , M-1$. 
From Theorem \ref{th_spectral_mapping} it follows that the eigenfunctions $\cos(\lambda_{j} G(x))$ of $B$ in (\ref{4.2A}) are also eigenfunctions of 
$S_{G,\tau}^{sym}$. Indeed, we find by direct computation
\begin{eqnarray*}
S_{G,\tau}^{sym} \, \cos(\lambda_{j} G(x)) &=& \frac{1}{2} \left[ \cos(\lambda_{j} G(G^{-1}(\tau + G(x)))) + \cos(\lambda_{j} G(G^{-1}(-\tau + G(x)))) \right] \\
&=& \frac{1}{2} \left[ \cos (\lambda_{j}(\tau + G(x))) + \cos (\lambda_{j}(-\tau + G(x))) \right ]\\
&=& \cos(\lambda_{j} \tau) \, \cos( \lambda_{j} G(x)).
\end{eqnarray*}
Therefore, the eigenvalues have here the form $\cos(\lambda_{j} \tau)$ and are pairwise different for $\lambda_{j} \in [0, C)$ if $0 < \tau < \frac{\pi}{C}$. Further, the  sampling matrix $(F_{k} (\varphi(B)^{\ell} f ))_{k,\ell=0}^{M-1,M}$ is admissible by Lemma \ref{lem:1}.
This sampling matrix has Hankel structure and is determined by 
$$ F_{k} (f) = F( (S_{G, \tau}^{sym})^{k} f) = ((S_{G, \tau}^{sym})^{k} f)(x_{0}) = \frac{1}{2^{k}} \sum_{r=0}^{k} \binom{k}{r} f(G^{-1}(G(x_{0}) + (k-2r)\tau)) $$
for $k=0, \ldots , 2M-1$. Thus the assertion follows.
\end{proof}

\begin{example}
We  want to  reconstruct expansions of the form 
\begin{equation}\label{cheb} f(x) = \sum_{j=1}^{M} c_{j} \, \cos(\lambda_{j} \, \arccos (x))
\end{equation}
with $c_{j} \in {\mathbb C} \setminus \{0 \}$ and $\lambda_{j} \in [0, C)$.
Therefore, we choose $G(x):=\arccos(x)$ on the interval $[-1,1]$, and $g(x) := 1/G'(x) = -(1-x^{2})^{1/2}$. According to our observations we take 
 $A f(x) = g(x) f'(x) = -\sqrt{1-x^{2}} \, f'(x)$ and 
$$ B f(x) = A^{2} f(x) = \left(\sqrt{1-(\cdot)^{2}} \, \frac{{\mathrm d} }{{\mathrm d} x} \right)^{2} f(x) = (1-x^{2}) \, f''(x) - x f(x) $$
on $I=[-1,1]$.
Then, $B$ possesses the eigenfunctions $\cos(\lambda \, \arccos x)$ for $\lambda \ge 0$. Taking  the non-negative integers $\lambda =n \in {\mathbb N}_{0}$, we particularly obtain  the Chebyshev polynomials  $T_{n}(x) = \cos(n \arccos x)$. According to Theorem \ref{theo4.2} we can now reconstruct the expansion (\ref{cheb})
using only the samples
$ ((S_{\arccos, \tau}^{sym})^{k} f)(x_{0})$, $k=0, \ldots , 2M-1$, which can  be computed from the values
$$ f( \cos( k \tau + \arccos(x_{0}))), \qquad k=-2M+1, \ldots , 2M-1. $$
We can choose $x_{0} = 1$ to ensure that 
$ \cos(\lambda \, G(x_{0})) = \cos(\lambda \, \arccos(1)) = 1 \neq 0 $
for all $\lambda \in [0,C)$. Further, we take
 $\tau \in (0, \min \{\frac{\pi}{C}, \, \frac{\pi}{2M} \})$ such that $k\tau + \arccos x_{0} = k \tau \in [0, \pi )$ for $k=0, \ldots , 2M-1$. In this special case the values $f(\cos(k \tau))$, $k=0, \ldots , 2M-1$, are sufficient for full recovery since the cosine function is symmetric.
Different approaches to recover expansions into Chebyshev polynomials are taken in \cite{Potts.2014} and \cite{PSK18}. $\Box$
\end{example}

\section{Generalized sampling for the Prony method} 
\label{sec_generalized-sampling}
\setcounter{equation}{0}

In this section we study  admissible sampling schemes in GOP in more detail and want to give some special applications.

Let us assume that the normed vector space $V$ is a subspace of $L^{2}([a, b])$ and fix the linear operator $A: V \to V$. 
We denote with $\sigma(A)$ a  fixed set of pairwise different eigenvalues of $A$ 
and consider the set $V_{\sigma}$ of corresponding eigenvectors such that the map $\lambda \to v_{\lambda}$ is a bijective map from $\sigma(A)$ onto $V_{\sigma}$. 
By Theorem \ref{GOProM} we know that $A$ can be replaced by an iteration operator $\varphi(A)$.

In this section  we will focus on  finding an admissible set $\{F_{k} \}_{k=0}^{M-1}$ of sampling functionals according to Definition \ref{def_evaluation_scheme} such that entries of the sampling matrix $(F_{k}(A^{\ell} f))_{k,\ell=0}^{M-1,M}$ can be simply computed.
We recall that a  set of sampling functionals $F_{k}: V \to {\mathbb C}$ is admissible if  $(F_{k}(v_{\lambda}))_{k=0,\lambda \in \Lambda_{M}}^{M-1}$ has full rank $M$ for all subsets $\Lambda_{M} \subset \sigma(A)$ with cardinality $M$.
Then it follows by Theorem \ref{theosamp} that the sampling matrix $(F_{k} (A^{\ell}f))_{k,\ell=0}^{M-1,M}$ has full rank $M$ for each $f \in {\mathcal M}(A)$ such that $f$ can be uniquely recovered.

We consider functionals  $F_{k}:{\mathcal M}(A) \to {\mathbb C}$ which can be written as 
\begin{equation}
\label{fk1} F_{k} (f) := \langle f, \, \phi_{k} \rangle = \int_{a}^{b} f(x) \, \phi_{k}(x) \, {\mathrm d} x,
\end{equation}
where $(a,b) \subseteq {\mathbb R}$ is a suitable interval and $\phi_{k}$ is
 some kernel function or distribution, such that the integral in (\ref{fk1}) is well-defined in a distribution sense. For example, we can take $\phi_{k}$ to be the $\delta$-distribution, 
$$ F_{k} (f) := \langle f, \, \delta(\cdot - x_{0}) \rangle = \int_{a}^{b} f(x) \, \delta(\cdot - x_{0}) \, {\mathrm d} x = f(x_{0}), \qquad x_{0} \in [a,b]. $$
Using the adjoint operator, the entries of the sampling matrix can be written as
\begin{equation}\label{sampi}  F_{k} (A^{\ell} f) = \langle A^{\ell} f, \, \phi_{k} \rangle = \langle
f, \, (A^{*})^{\ell} \phi_{k} \rangle = \int_{a}^{b} f(x) \, (A^{*})^{\ell} \, \phi_{k}(x) \, {\mathrm d} x.
\end{equation}
If $A$ is a linear differential operator, the consideration of powers of the adjoint operator $A^{*}$ applied to $\phi_{k}$ is particularly useful, if we  cannot acquire  derivative samples of $f$ but special moments instead.
In this case, we need to assume that the kernel functions $\phi_{k}$ are sufficiently smooth on $[a,b]$, such that $(A^{*})^{\ell} \phi_{k} \in L^{2}([a,b])$.
For admissibility we need now to ensure that  $(\langle v_{\lambda}, \, \phi_{k} \rangle)_{k=0,\lambda \in \Lambda_{M}}^{M-1}$ has full rank $M$.


\begin{example} 
We consider again the example of exponential sums to present the variety of possible sampling matrices that can be used.
Let 
$$ f(x) = \sum_{j=1}^{M} c_{j} \, {\mathrm e}^{T_{j} x}$$
with $c_{j} \in {\mathbb C} \setminus \{ 0 \}$, $T_{j} \in {\mathbb R} + {\mathrm i} [-\pi, \, \pi)$, where ${\mathrm e}^{T_{j}x}$ are eigenfunctions of $A=\frac{\mathrm d}{{\mathrm d} x}$ to the eigenvalue $T_{j}$.
Here ${\mathcal M}(A)$ is a subset of the Schwartz space, and thus obviously a subspace of $L^{2}([a,b])$ for each interval $[a,b]$ and also of $L^{2}({\mathbb R})$.
We present a variety of sampling schemes which are  all admissible and of the form (\ref{sampi}).

a) Let $F (f) := \int_{-\infty}^{\infty} f(x) \, \delta(x - x_{0}) \, {\mathrm d} x = f(x_{0})$ be the point evaluation  functional 
with $x_{0} \in {\mathbb R}$ and let $F_{k}(f) := F(A^{k}f)$.
Then the entries of the sampling matrix are of the form 
$$ F_{k}(A^{\ell} f) = \int_{-\infty}^{\infty} A^{\ell }f (x) \, A^{k} \delta(x - x_{0}) \, {\mathrm d} x = \int_{-\infty}^{\infty} f (x) \, \delta^{(k+\ell)}(x - x_{0})  {\mathrm d} x = f^{(k+\ell)}(x_{0}) $$
used in Section \ref{subsecdif}, where we need derivative values $f^{(k)}(x_{0})$, $k=0, \ldots , 2M-1$. 
The used kernel functions are in this case the distributions $\phi_{k} = A^{k} \delta(\cdot - x_{0}) = \delta^{(k)}(\cdot - x_{0})$, i.e., derivatives of the Delta distribution.
Admissibility is ensured  since  for any $T_{j} \in {\mathbb R} + {\mathrm i} [-\pi, \, \pi)$,  
$$(\langle {\mathrm e}^{T_{j}\cdot}, \, \phi_{k} \rangle)_{k=0, j=1}^{M-1,M} = ( T_{j}^{k} {\mathrm e}^{T_{j}x_{0}})_{k=0,j=1}^{M-1,M} = (T_{j}^{k})_{k=0,j=1}^{M-1,M} \, \textrm{diag} ( {\mathrm e}^{T_{j}x_{0}})_{j=1}^{M}
$$
has full rank $M$.
\smallskip

b) By Lemma \ref{lem:1} we can also take $F_{k}(f) = F(\psi(A)^{k} f)$ for some iteration operator $\psi(A)$ with $F$ as in a). With 
$\psi(A) = \exp(\tau A) = S_{\tau}$, $\tau \neq 0$, see Example \ref{exs}, we obtain the admissible sampling matrix with entries
\begin{eqnarray*}
 F_{k}(A^{\ell} f) &=& \int_{-\infty}^{\infty} (S_{\tau}^{k} A^{\ell} f)(x) \, \delta(x-x_{0})\, {\mathrm d} x\\
 & =& \int_{-\infty}^{\infty}  f(x) \, ( (A^{\ell})^{*} (S_{\tau}^{k})^{*} \delta) (x-x_{0})\, {\mathrm d} x\\
 &=& \int_{-\infty}^{\infty}  f(x) \,  \delta^{(\ell)} (x-\tau k-x_{0})\, {\mathrm d} x =  f^{(\ell)}(x_{0}+ \tau k), 
 \end{eqnarray*}
where we need the values $f^{(\ell)}(x_{0} + k \tau)$, $\ell=0, \ldots , M$, $k=0, \ldots , M-1$, see Section \ref{subsec:gen2}.
We have here $\phi_{k} = (S_{\tau}^{k})^{*} \delta(\cdot - x_{0}) = \delta(\cdot - \tau k -x_{0})$, $k=0, \ldots, M-1$.
\smallskip

c) Consider now the functional 
\begin{equation} \label{fun1} F (f) := \int_{0}^{1} f(x) \, \phi(x) \, {\mathrm d} x
\end{equation}
with $\phi(x) := x^{2M}(1-x)^{2M}$. Then 
$$F ( {\mathrm e}^{T \cdot}) = \int_{0}^{1} {\mathrm e}^{Tx} \, \phi(x) {\mathrm d} x \neq 0$$ 
for all $T \in {\mathbb R} + {\mathrm i} [-\pi, \, \pi)$ since $\phi(x)>0$ for $x \in (0,1)$.
Thus, with $F_{k} := F(A^{k})$ we obtain
\begin{eqnarray*}  
F_{k}(A^{\ell}f) &=& F(A^{k+\ell}f ) = \int_{0}^{1} f^{(k+\ell)}(x) \, x^{2M} (1-x)^{2M} \, {\mathrm d} x \\
&=& (-1)^{k+\ell} \, \int_{0}^{1} f(x) \, [x^{2M} (1-x)^{2M}]^{(k+\ell)} \, {\mathrm d} x
\end{eqnarray*}
is admissible. These values can be computed from the moments
$ \int_{0}^{1} f(x) x^{s} \, {\mathrm d} x$ for $s=0, \ldots , 4M$.
The functions $\phi_{k}$ are here defined as $\phi_{k} := \phi^{(k)} $, $k=0, \ldots , M-1$.
\smallskip

d) Let us now take the functional $F$ as in (\ref{fun1}), but with $\phi(x) := x^{M}(1-x)^{M}$ and let 
$F_{k}(f) := F(\exp(k A) f)  = F(S_{1}^{k} \, f)$ according to Lemma \ref{lem:1}. Then we get the entries of the admissible sampling matrix in the form 
\begin{eqnarray*} F_{k}(A^{\ell}f) \!\! &=& \!\! \int_{0}^{1} f^{(\ell)}(x+ k) \, x^{M} (1-x)^{M} \, {\mathrm d} x =
(-1)^{\ell} \int_{0}^{1} f(x+k) \, [x^{M} (1-x)^{M}]^{(\ell)} \, {\mathrm d} x \\
&=& (-1)^{\ell} \int_{k}^{k+1} f(x) \, [(x-k)^{M} (k+1-x)^{M}]^{(\ell)}\,  {\mathrm d} x.
\end{eqnarray*}
These entries can be computed from the moments $\int_{0}^{1} f(x+k) \, x^{s} {\mathrm d} x$ for $k=0, \ldots , M-1$ and $s=0, \ldots , 2M$.
The functions $\phi_{k}$ are of the form $\phi_{k}(x)= (x-k)^{M} (k+1-x)^{M}$, $k=0, \ldots , M-1$.

e) Besides all the sampling schemes above, we know from Section \ref{subsecshift}  that $f$ can be reconstructed using the $2M$ samples $f(x_{0}+ k\tau)$, $k=0, \ldots , 2M-1$, with $x_{0} \in {\mathbb R}$, $\tau \neq 0$.
This sampling scheme also follows from Theorem \ref{GOProM} by replacing $A$ by the iteration operator $\exp(\tau A) =S_{\tau}$.
The simple equidistant sampling is obtained by taking $F_{k}= F(S_{\tau}^{k})$ and the kernel function $\phi(x) = \delta(x-x_{0})$ as in a), such that 
$$ F_{k}((\exp(\tau A))^{\ell} f))= F(S^{k+\ell} f) = f(x_{0}+ (k+\ell)\tau). $$
The kernel functions $\phi_{k}$ are here $\phi_{k}= \phi(\cdot - \tau k)$, $k=0, \ldots , M-1$.
Taking instead $F_{k}= F(S_{2\tau}^{k})$ we arrive at  
$$ F_{k}(S_{\tau}^{\ell} f) = F(S_{2\tau}^{k} S_{\tau}^{\ell}f) = f( x_{0}+\tau(2k+\ell)), \quad k=0, \ldots , M-1, \, \ell=0, \ldots , M, $$
and also this sampling matrix is admissible by Lemma \ref{lem:1}. Here we have now $\phi_{k} = \phi(\cdot - 2\tau k)$, $k=0, \ldots , M-1$. $\Box$

\end{example}
\medskip

\noindent
Besides the well-known example of exponential sums, we can also find new sampling schemes for expansions into eigenfunctions of differential operators of higher order, where we need to acquire moments instead of derivative values.
This can be always achieved  by employing  suitable kernels $\phi_{k}$ and the adjoint operator representation in (\ref{sampi}).

Let us consider the linear differential operator 
\begin{equation}\label{adif}
A:=
\sum\limits_{n=0}^{d} g_n(\cdot) \,  \frac{{\mathrm d}^n}{{\mathrm d}  x^n}
\end{equation}
of order $d$ with sufficiently smooth functions $g_{n}$.
Further, let $\sigma(A)$ be a subset of pairwise distinct eigenvalues $\lambda$ of $A$ with corresponding eigenfunctions $v_{\lambda} \in L^{2}([a,b])$ such that we have a bijection $\lambda \to v_{\lambda}$.

\begin{lemma}\label{lem5.1}
Let $A$ be an operator in $(\ref{adif})$ with $g_{n} \in C^{d}([a,b])$ for $n=0, \ldots , d$, 
and let $F: L^{2}([a,b]) \to {\mathbb C}$ be a functional given by 
$F f = \langle f, \, \phi \rangle$, where $\phi \in C^{d}([a,b])$ and 
$$ \lim_{x \to a} \phi^{(\ell)}(x) = \lim_{x \to b} \phi^{(\ell)}(x) = 0, \qquad \ell=0, \ldots , d. $$
Then 
$$ F (Af) = \langle Af, \, \, \phi \rangle = \left\langle f, \sum_{n=0}^{d} (-1)^{r} \sum_{\ell=0}^{r} \binom{r}{\ell}  g_{n}^{(\ell)} \, \phi^{(r-\ell)} \right\rangle,
$$
where $g_{n}^{(\ell)}$ and $\phi^{(\ell)}$ denote the $\ell$-th derivative of $g_{n}$ and $\phi$, respectively.
\end{lemma}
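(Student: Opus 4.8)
The identity is a clean instance of iterated integration by parts: the right-hand side is exactly $\langle f, A^{*}\phi\rangle$, where $A^{*}\phi := \sum_{n=0}^{d}(-1)^{n}\frac{\mathrm d^{n}}{\mathrm dx^{n}}(g_{n}\phi)$ is the formal adjoint of $A$ applied to $\phi$ (in the statement the summation index should be read with $r=n$). So the plan is to prove, for each single summand $g_{n}(\cdot)\frac{\mathrm d^{n}}{\mathrm dx^{n}}$ of $A$ separately, the intermediate identity
$$\int_{a}^{b} g_{n}(x)\,f^{(n)}(x)\,\phi(x)\,\mathrm dx \;=\; (-1)^{n}\int_{a}^{b} f(x)\,\big(g_{n}\phi\big)^{(n)}(x)\,\mathrm dx,$$
and then to expand $\big(g_{n}\phi\big)^{(n)}$ by the Leibniz rule and sum over $n=0,\dots,d$.

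First I would write $\langle Af,\phi\rangle = \sum_{n=0}^{d}\int_{a}^{b} f^{(n)}(x)\big(g_{n}(x)\phi(x)\big)\,\mathrm dx$, grouping the factor $g_{n}\phi$ together; since $g_{n}\in C^{d}([a,b])$ and $\phi\in C^{d}([a,b])$, the product $g_{n}\phi$ lies in $C^{d}([a,b])$, so for the $n$-th summand one may integrate by parts $n$ times, each step moving one derivative off $f$ and onto $g_{n}\phi$. A short induction on the number $j$ of steps performed records that after $j$ integrations by parts the $n$-th summand equals
$$\sum_{i=1}^{j}(\pm 1)\Big[f^{(n-i)}(x)\,\big(g_{n}\phi\big)^{(i-1)}(x)\Big]_{x=a}^{x=b} \;+\; (-1)^{j}\int_{a}^{b} f^{(n-j)}(x)\,\big(g_{n}\phi\big)^{(j)}(x)\,\mathrm dx.$$

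The only place where the hypotheses on $\phi$ are used is in showing that all boundary terms vanish. The boundary terms occurring above involve $\big(g_{n}\phi\big)^{(i-1)}$ with $i-1\le n-1\le d-1$, and by the Leibniz rule $\big(g_{n}\phi\big)^{(m)}=\sum_{k=0}^{m}\binom{m}{k}g_{n}^{(k)}\phi^{(m-k)}$ is a finite combination of the derivatives $\phi^{(m-k)}$ with $m-k\le m\le d-1<d$; each of these tends to $0$ as $x\to a$ and as $x\to b$ by assumption, while the remaining factors $f^{(n-i)}$ and $g_{n}^{(k)}$ stay bounded near the endpoints (for $f$ in the domain of $A$ its derivatives up to order $d$ are continuous on $[a,b]$, and $g_{n}^{(k)}$ is continuous by the regularity assumption on $g_{n}$). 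Hence after $n$ steps all boundary contributions disappear and the intermediate identity above is proved.

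It then remains to apply the Leibniz rule once more, now with $m=n$, to obtain $\big(g_{n}\phi\big)^{(n)}=\sum_{\ell=0}^{n}\binom{n}{\ell}g_{n}^{(\ell)}\phi^{(n-\ell)}$, to substitute this into each summand, and to sum over $n=0,\dots,d$; this yields the claimed formula $\langle Af,\phi\rangle=\big\langle f,\;\sum_{n=0}^{d}(-1)^{n}\sum_{\ell=0}^{n}\binom{n}{\ell}g_{n}^{(\ell)}\phi^{(n-\ell)}\big\rangle$. I expect the one point deserving care to be the vanishing of the boundary terms at $a$ and $b$ — in particular if one wishes to permit $a=-\infty$ or $b=+\infty$, where one must also control the growth of $f$ and its derivatives against the decay of $\phi$ — which is precisely why the decay condition is imposed on all derivatives $\phi^{(\ell)}$, $\ell=0,\dots,d$; the algebraic part (the two applications of the Leibniz rule) is entirely routine.
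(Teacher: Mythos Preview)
Your proof is correct and follows exactly the approach of the paper, which simply states that the identity follows by partial integration with the boundary terms vanishing because of the assumption on $\phi$. Your write-up is in fact considerably more detailed than the paper's one-line justification, and you correctly note that the index $r$ in the displayed formula should be read as $n$.
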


\begin{proof}
The proof follows simply by partial integration, where the boundary terms vanish because of the assumption on $\phi$.
\end{proof}
\medskip

Thus, we can apply the sampling scheme arising from (\ref{sampi}) where we need to compute with  derivatives of the kernel functions instead of derivatives of $f$.

\begin{example}[Sparse Legendre Expansions]
We want to recover a sparse expansion into Legendre
polynomials of the form 
$$f(x) :=\sum\limits_{j=1}^M c_j \, P_{n_j}(x)$$
where $c_{j} \in {\mathbb C} \setminus \{ 0 \}$, and $n_{j} \in {\mathbb N}_{0}$
with $0 \le n_{1} < n_{2} < \ldots < n_{M}$. The Legendre polynomials $P_{n}$,
$n \in {\mathbb N}_{0}$ are eigenfunctions of the differential operator of
second order $$A f(x):=(x^2-1) \, f''(x) + 2x\,  f'(x), $$
and we have 
$$A \, P_n = n(n+1) \, P_n.$$
Employing a functional of the form 
$$ F(f):=\int_{a}^{b}f(x)\phi_P(x) \, dx, $$
with a smooth kernel $\phi_{P}$ satisfying $\phi_{P}(a)=\phi_{P}(b)=0$ and $\phi_{P}'(a)=\phi_{P}'(b)=0$,
it follows that 
$$ \int_{a}^{b} Af(x) \, \phi_{P}(x) \, dx = \int_{a}^{b} f(x) \, A \phi_{P}(x)
 \, dx. $$
We choose the kernel
\begin{equation} \label{phip}
\phi_P(x):=\begin{cases}
(x-a)^{4M}(x-b)^{4M}\,\exp\left(-\alpha(x-\beta_0)^2(x-\beta_1)^2\right) & x\in
[a,\, b], \\
0 & x\not\in [a,\, b].\\
\end{cases}
\end{equation}
Here, the parameters $\beta_0$ and
$\beta_1$ are chosen to be outside of the interval $[a,\,  b]$, and $\alpha \ge 0$. For $\alpha=0$, $\phi_{P}$ is a polynomial of degree $8M$.

Taking for example $[a,\, b] = [-1/2, \, 3/4]$, it follows that the functional $F$ satisfies the admissibility condition 
$F(P_{n}) \neq 0$ for all $n \in {\mathbb N}_{0}$. 
Therefore, the expansion $f$ can be recovered  from the $2M$ samples
$$ F(A^{k} f) = \int_{-1/2}^{3/4} f(x) \, A^{k}\phi_{P}(x) dx, \quad k=0, \ldots, 2M-1. $$

We consider a small computational example. We want to recover the parameters $c_{j}$ and $n_{j}$ of the expansion 
$$f(x)=\sum\limits_{j=1}^3 c_{j}\, P_{n_j}(x)$$
from the 6 samples $F(A^{k} f)$, $k=0, \ldots , 5$. The true  parameters are given in Table \ref{parameters_legendre}.
{\small
\begin{table}[ht]
\renewcommand*\arraystretch{1.2}
$$\setlength\arraycolsep{12pt}
 \begin{array}{|r|r|r|r|} 
 \hline 
 n_{j} & 1 &  4  & 9 \\
 \hline
 c_j & 1.703 &  3.193  & 3.710 \\ 
\hline
 \end{array}
$$
\caption{Active degrees $n_{j}$ and the corresponding linear coefficients $c_j$ of $f$ with parameters \\ in Table \ref{parameters_legendre}.}
\label{parameters_legendre}
\end{table}
}

The signal with this parameters is presented in Figure \ref{Legendre_signal}.
\begin{figure}[ht] 
	\centering
  \includegraphics[width=0.6\textwidth]{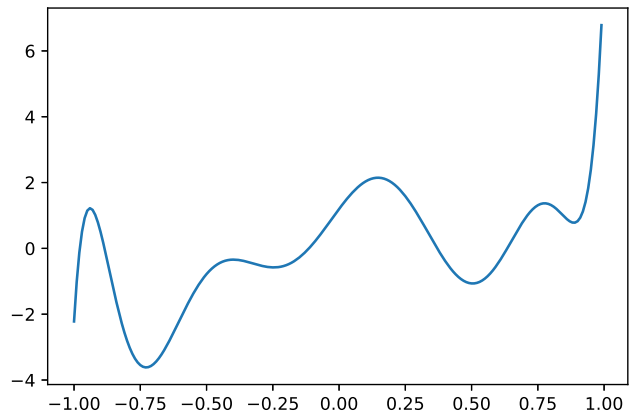}
	\caption{3-sparse Legendre expansion $f$ with parameters in Table \ref{parameters_legendre}.} 
		\label{Legendre_signal}
\end{figure}

We choose now the sampling kernel $\phi_{P}$ in (\ref{phip}) with $a=-1/2$, $b=3/4$, 
 $\alpha=0.1$, and
$-\beta_0=\beta_1=2$. 
The kernels $A^{k} \phi_{P}$, $k=0, \ldots , 5$, are depicted in Figure 2.
\relax
\begin{figure}[ht] 
	\centering
  \includegraphics[width=150mm]{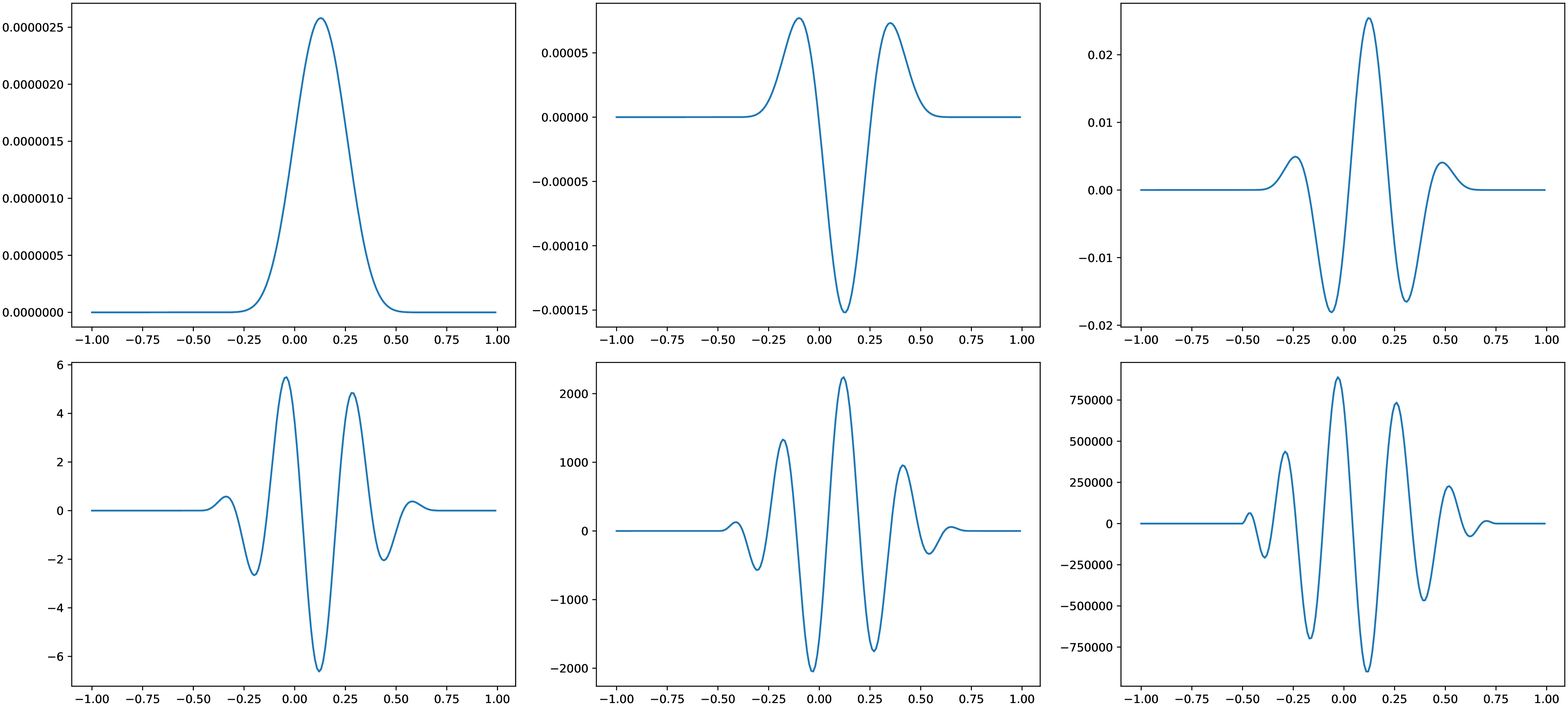}
  \centering
	{\small \textbf{Figure 2} 
	\textit{Sampling kernels $A^{k}\phi_{P}$, $k=0,1,2$ (first row) $k=3,4,5$ (second row) \\ for a 3-sparse Legendre expansion.}} 
		\label{Legendre_kernels}
\end{figure}
These kernels can now be used for any  $3-$sparse linear combination of
arbitrary Legendre polynomials. 
For our example, the sampling matrix has the form
$$
\left[
\begin{matrix}
F(f) & F(Af)) & F(A^{2}f) & F(A^{3}f)\\
F(Af) & F(A^{2}f)) & F(A^{3}f) & F(A^{4}f)\\
F(A^{2}f) & F(A^{3}f)) & F(A^{4}f) & F(A^{5}f)\\
\end{matrix}\right].
$$
The reconstructed parameters can be seen in Table \ref{rec_parameters_legendre}.
{\small
\begin{table}[ht]
\renewcommand*\arraystretch{1.2}
$$\setlength\arraycolsep{12pt}
 \begin{array}{|r|r|r|r|} 
 \hline 
 n_{j} & 1.00008823 & 4.00001099 & 9.00000026 \\
 \hline
 c_j & 1.703 &  3.193  & 3.710 \\

\hline
 \end{array}
$$
\caption{Computed parameters  $n_{j}$ and  $c_j$ for $f$.}
\label{rec_parameters_legendre}
\end{table}
}
The polynomial degrees are correctly recovered up to small rounding errors.
We round to the closest integer and get the exact values $n_{j}$.
The  coefficients $c_{j}$ are found using a $3 \times 3$ Vandermonde system.
Alternatively, to  recover the coefficients, we can use the orthogonality of Legendre polynomials and obtain 
$$ c_{j} = \frac{2 n_{j}+1}{2} \int_{-1}^{1} f(x) P_{n_{j}}(x) dx. $$
The numerical instabilities due to the exponentially growing functions $A^{k}\phi_{P}$ are an issue in this approach.
A clever choice of the parameters of $\phi$ can help to control the
amplitudes of $A^{k}\phi_{P}$. Another way is to apply a set of different functionals $F_{k}$ as proposed in Section 3.3.
\end{example}

\subsection*{Acknowledgement}
The authors gratefully acknowledge support by the German Research Foundation in the framework of the RTG 2088 and in the project PL 170/16-1.

\small
\bibliography{myBib}{}
\bibliographystyle{plain}

\end{document}